\newcommand{\mcm}[3]{\newcommand{#1}[#2]{{\ensuremath{#3}}}} 
\newcommand{\sm}{\setminus}
\newcommand{\se}{\subseteq}
\newcommand{\ct}{^\complement}
\mcm{\tuple}{1}{\langle #1 \rangle}
\mcm{\name}{1}{\ulcorner #1 \urcorner}
\mcm{\Nbb}{0}{\mathbb{N}}
\mcm{\Zbb}{0}{\mathbb{Z}}
\mcm{\Qbb}{0}{\mathbb{Q}}
\mcm{\Rbb}{0}{\mathbb{R}}
\mcm{\Cbb}{0}{\mathbb{C}}
\mcm{\Fbb}{0}{\mathbb{F}}
\mcm{\Bcal}{0}{\cal B}
\mcm{\Ccal}{0}{\cal C}
\mcm{\Dcal}{0}{\cal D}
\mcm{\Ecal}{0}{\cal E}
\mcm{\Fcal}{0}{\cal F}
\mcm{\Gcal}{0}{\cal G}
\mcm{\Hcal}{0}{\cal H}
\mcm{\Ical}{0}{\cal I}
\mcm{\Lcal}{0}{\cal L}
\mcm{\Mcal}{0}{\cal M}
\mcm{\Ncal}{0}{\cal N}
\mcm{\Pcal}{0}{{\cal P}}
\mcm{\Scal}{0}{{\cal S}}
\mcm{\Tcal}{0}{{\cal T}}
\mcm{\Ucal}{0}{{\cal U}}
\mcm{\Vcal}{0}{{\cal V}}
\mcm{\Wcal}{0}{{\cal W}}
\mcm{\Xcal}{0}{{\cal X}}
\mcm{\Ycal}{0}{{\cal Y}}
\mcm{\Zcal}{0}{{\cal Z}}
\mcm{\Mfrak}{0}{\mathfrak M}
\mcm{\restric}{0}{\upharpoonright}
\mcm{\upset}{0}{\uparrow}
\mcm{\downset}{0}{\downarrow}
\mcm{\onto}{0}{\twoheadrightarrow}
\mcm{\smallNbb}{0}{{\small \mathbb{N}}}
\DeclareMathOperator{\preop}{op}
\mcm{\op}{0}{^{\preop}}
\newcommand{\itum}{\item[$\bullet$]}
\newcommand{\theoremize}[2]{\newaliascnt{#1}{thm} \newtheorem{#1}[#1]{#2} \aliascntresetthe{#1}}
\theoremstyle{plain}
\newtheorem{thm}{Theorem}[section]
\theoremstyle{definition}
\theoremstyle{plain}
\newcommand{\cev}[1]{\reflectbox{\ensuremath{\vec{\reflectbox{\ensuremath{#1}}}}}}
\DeclareMathOperator{\finn}{fin}
\DeclareMathOperator{\co}{cofin}
\newcommand{\fin}{^{\finn}}
\newcommand{\cofin}{^{\co}}
\title{\scshape The ubiquity of Psi-matroids}
\author{Nathan Bowler \and Johannes Carmesin}
\begin{document}

\maketitle

\begin{abstract}
Solving (for tame matroids) a problem of Aigner-Horev, Diestel and Postle, we prove that 
every tame matroid $M$ can be reconstructed from its canonical tree decomposition into 3-connected pieces, circuits and cocircuits together with  information about which ends of the decomposition tree are used by $M$.

For every locally finite graph $G$, we show that every tame matroid 
whose circuits are topological circles of $G$ and whose cocircuits are bonds of $G$
is determined by the set $\Psi$ of ends it uses, that is, it is a $\Psi$-matroid.

\end{abstract}

\section{Introduction}

There is a canonical matroid associated to any finite graph $G$, whose circuits are the (edge sets of) cycles in $G$ and whose cocircuits are the bonds of $G$ (the minimal nonempty cuts). However, when $G$ is infinite there is no longer a canoncical choice of matroid. Instead, there are at least two very natural choices. The first is the finite cycle matroid $M_{FC}(G)$, defined as above, which is always finitary (all its circuits are finite). The second is the topological cycle matroid $M_{C}(G)$ whose circuits are (edge sets of) topological circles in a suitable compactification $|G|$ of $G$ and whose cocircuits are the finite bonds of $G$~\cite{DiestelBook10}. These topological circles arise naturally in the generalisations of some fundamental theorems from finite to infinite graphs~\cite{RDsBanffSurvey}. The topological cycle matroid is always cofinitary (all its cocircuits are finite).

If $G$ is an infinite planar graph then we cannot hope to find a dual graph $G^*$ such that $M_{FC}(G*) = (M_{FC}(G))^*$ since the first of these is finitary whilst the second need not be. Instead, what happens (under some weak assumptions) is that there is a dual graph $G^*$ such that $M_{C}(G^*) = (M_{FC}(G))^*$ and $M_{FC}(G^*) = (M_{C}(G))^*$~\cite{RD:HB:graphmatroids}.

There is a whole range of matroids sitting in between these two. The new points added in the compactification $|G|$ are called the ends of $G$. For a set $\Psi$ of ends and a countable graph $G$, we very often get a matroid $M_{\Psi}(G)$, called the {\em $\Psi$-matroid} of $G$, whose circuits are the topological circles in $G$ that only use ends from $\Psi$. In particular, we always get a matroid in this way if $\Psi$ is Borel~\cite{BC:psi_matroids}. The extreme cases -- where $\Psi$ is empty or the set of all ends -- give rise to the finite and topological cycle matroids respectively. The $\Psi$-matroids also reflect duality of graphs. If $G^*$ is the dual of $G$ then they have homeomorphic spaces of ends~\cite{BS:dual_ends} and for any Borel set $\Psi$ of ends we have $M_{\Psi\ct}(G^*) = (M_{\Psi}(G))^*$~\cite{DP:dualtrees, BC:psi_matroids} (this specialises to the results above by taking $\Psi$ to be empty or the set of all ends).

From now on we fix some locally finite graph $G$. All $\Psi$-matroids for $G$ have the property that all their circuits are topological circles of $G$ and all their cocircuits are bonds of $G$. We will call matroids with this property $G$-matroids. This paper is concerned with the question:

\begin{que}
 Characterise the $G$-matroids of locally finite graphs $G$.
\end{que}

Not all $G$-matroids are $\Psi$-matroids.

\begin{eg}\label{eg:quircuits}
 Let $Q$ be the graph depicted in \autoref{fig:quircuits}. We say that two topological circuits of $Q$ are {\em equivalent} if their symmetric difference is finite. Let $\Ccal$ be any union of equivalence classes which includes all finite circuits. Then it can be shown that $\Ccal$ is the set of circuits of a $Q$-matroid~\cite{BCP:quircuits}.
\begin{figure}
\begin{center}
 \includegraphics[width=8cm]{./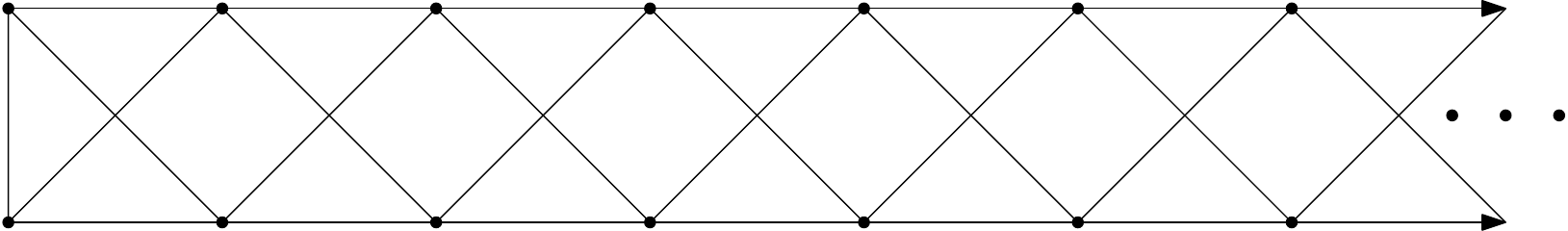}
\end{center} 
\caption{The graph $Q$}	\label{fig:quircuits}
\end{figure}
\end{eg}


In cases like the above example, we can get a large set of badly behaved\footnote{for example, such matroids can be non-binary in the sense that there are 3 circuits whose symmetric difference does not include a circuit} matroids, each specified by a great deal of information `at infinity'. In order to characterise the $G$-matroids of a general graph $G$ we would require a specification of complex information distributed somehow over the ends of $G$, and this is currently intractable. However, there is a simple and natural restriction on the $G$-matroids which makes this problem much more tractable.

We say that a matroid $M$ is {\em tame} if the intersection of any circuit with any cocircuit is finite. Otherwise, we say $M$ is {\em wild}. The concept goes back to Dress~\cite{dress}, though not under this name: he introduced a different notion of infinite matroid to which tameness was fundamental. As recently as 2010 it was suggested that all infinite matroids might necessarily be tame~\cite{matroid_axioms}. Tameness and countability are the only extra restrictions we need to add to Minty's axiomatisation of finite matroids~\cite{Minty} to get an axiomatisation of the class of countable tame matroids: there is no need for an extra axiom requiring the existence of bases~\cite{BC:psi_matroids}. However, there are many wild matroids, as first shown in~\cite{BC:wild_matroids}. In fact, all but 2 of the $Q$-matroids in \autoref{eg:quircuits} are wild~\cite{BCP:quircuits}.

Nevertheless, the class of tame matroids is closed under duality and under taking minors, and includes most natural examples of matroids. For example, all finitary or cofinitary matroids and all $\Psi$-matroids are tame. Tameness gives a natural context for the consideration of representability of infinitary matroids. The notion of thin sums representability was introduced in~\cite{RD:HB:graphmatroids} as a generalisation of representability to infinitary matroids. Although there is a wild matroid which is thin sums representable over $\Qbb$ but whose dual is not thin sums representable over any field~\cite{BC:wild_matroids}, the class of tame thin sums matroids is closed under duality and under taking minors~\cite{THINSUMS}. Moreover, forbidden minor characterisations extend readily from finite to tame thin sums matroids~\cite{BC:rep_matroids}, so that for example a tame matroid is thin sums representable over $\Fbb_2$ if and only if it does not have $U_{2,4}$ as a minor. Finally, tameness plays an 
essential role in the construction of an 
infinitary version of the class of graphic matroids~\cite{BCC:graphic_matroids}, so that the badly behaved $Q$-matroids are not graphic in this sense.

We therefore restrict our attention to the class of tame $G$-matroids, for which we are now able to provide a simple characterisation.

\begin{thm}\label{intpsi}
 The tame $G$-matroids are exactly the $\Psi$-matroids for $G$.
\end{thm}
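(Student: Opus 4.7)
The plan is to reduce \autoref{intpsi} to the first main result of the paper (announced in the abstract), which says that every tame matroid is reconstructible from its canonical tree decomposition into 3-connected pieces, circuits and cocircuits, together with the set of decomposition-tree ends it uses. The ``easy'' direction of \autoref{intpsi} is already recorded in the discussion above: every $\Psi$-matroid is tame by \cite{BC:psi_matroids}, its circuits are topological circles of $G$ by definition, and its cocircuits are bonds of $G$, so every $\Psi$-matroid is a tame $G$-matroid. The real work is the converse.

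For the converse, let $M$ be a tame $G$-matroid and set
\[
 \Psi := \{\omega : \omega \text{ is an end of } G \text{ lying on some circuit of } M\}.
\]
I would show $M = M_\Psi(G)$ by feeding both matroids into the reconstruction theorem and verifying that the input data coincide. The overall claim is that, for any tame $G$-matroid, the pieces in its canonical tree decomposition, the shape of the decomposition tree, and the correspondence between ends of that tree and ends of $G$ are already determined by $G$ alone; the only matroidal information that is not pinned down by $G$ is the set of used ends, and for a $G$-matroid this set turns out to coincide, under the canonical correspondence, with $\Psi$ as defined above.

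Concretely, I would proceed in three sub-steps. First, analyse the 2-separations of $M$: since every circuit is a topological circle of $G$ and every cocircuit is a bond of $G$, each 2-separation of $M$ must be realised either by a 2-vertex-separator of $G$ or by an end at which $G$ splits, so that the 3-connected pieces of $M$ and the shape of its decomposition tree are the same for every tame $G$-matroid. Second, identify the circuit and cocircuit pieces and the decomposition tree itself with objects built canonically from $G$, thereby producing a bijection between the ends of the decomposition tree and the ends of $G$. Third, unwind the definition of ``used end'' from the reconstruction theorem and check that, under this bijection, the tree-ends used by any tame $G$-matroid $M$ are exactly the ends of $G$ lying on circuits of $M$; this is essentially bookkeeping once the previous two sub-steps are in place. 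Together these sub-steps imply that $M$ and $M_\Psi(G)$ supply identical data to the reconstruction theorem, so they are equal.

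The main obstacle is the first sub-step: showing that every 2-separation of the abstract matroid $M$ is realised graph-theoretically in $G$. This is the point at which tameness must be used essentially. Indeed, \autoref{eg:quircuits} exhibits non-tame $Q$-matroids that are not $\Psi$-matroids for any $\Psi$, and the reason is exactly that their circuit sets carry ``end data'' which cannot be encoded by any subset of the ends of $Q$; in the language of the reconstruction theorem, their canonical decompositions fail to respect the end structure of the underlying graph. Proving the first sub-step therefore amounts to using tameness to couple the matroidal 2-separations of $M$ rigidly to the structural 2-separations of $G$ (both at finite separators and at ends), ruling out such pathologies and forcing the decomposition of $M$ to be the one coming from $G$.
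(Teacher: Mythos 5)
Your plan reduces the $G$-matroid theorem to the tree-of-matroids reconstruction theorem (Theorem 1.2), but there is a fundamental gap: the ends of the canonical decomposition tree of a tame $G$-matroid $M$ do \emph{not} correspond to the ends of $G$. Take $G$ to be the grid $\Zbb \times \Zbb$, a 3-connected locally finite graph with exactly one end $\omega$. Both $M_{FC}(G) = M_\emptyset(G)$ and $M_C(G) = M_{\{\omega\}}(G)$ are tame $G$-matroids and they differ, yet each is a 3-connected matroid (the infinite extension of Tutte's theorem), so each has a trivial canonical tree decomposition: a single node and no tree ends. The reconstruction theorem therefore says nothing that distinguishes them, and the bijection between tree ends and graph ends promised in your second sub-step cannot exist. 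Your first sub-step also fails on this example: the unique 3-connected ``piece'' of $M_{FC}(G)$ is $M_{FC}(G)$ itself, while that of $M_C(G)$ is $M_C(G)$, so the pieces are \emph{not} the same for every tame $G$-matroid.

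The confusion is between two distinct tree pictures. The canonical tree decomposition of the matroid (overlap 1, Theorems 1.2 and 4.8) records the 2-separations of $M$; the tree of finite matroids built from a graph-theoretic tree decomposition of $G$ (which need not have overlap 1 --- see the outlook and \cite{BC:psi_matroids}) is what actually carries the end structure of $G$. Only the first is covered by the reconstruction theorem, but only the second encodes $\Psi \se \Omega(G)$. Accordingly, the paper proves Theorem 1.3 directly from graph theory, bypassing the decomposition apparatus entirely. The heart of the argument is showing that no end $\omega$ of $G$ lies in the closure of both an $N$-circuit and an $N$-cocircuit: rerouting lemmas (Lemmas 5.2--5.4) let one replace a circuit through $\omega$ by a double ray converging only to $\omega$ and disjoint from the given cocircuit (Lemmas 5.5--5.6), and then a Ramsey-theoretic infinite circuit-elimination argument (Lemma 5.7) produces a circuit meeting that cocircuit infinitely often, contradicting tameness. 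After that, the bookkeeping you sketched at the end is exactly what the paper does: set $\Psi$ to be the ends in the closure of some $N$-circuit, observe that every circuit is a $\Psi$-circuit and every cocircuit a $\Psi^\complement$-bond, and apply Lemma 2.5 together with Lemma 2.7 to conclude $N = M_\Psi(G)$.
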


However the set $\Psi$ need not be Borel. The question of which sets $\Psi$ of ends give rise to matroids is tied to subtle set theoretic questions about determinacy of games \cite{BC:psi_matroids}.

Restricting our attention to tame matroids also allows us to resolve a problem due to Aigner-Horev, Diestel and Postle from~\cite{ADP:decomposition} about the reconstruction of connected matroids from their 3-connected pieces. 
Any finite connected matroid $M$ can be decomposed canonically into a tree of pieces, each of which is 3-connected, a circuit or a cocircuit~\cite{findec1, findec2}. Any two adjacent pieces share only a single edge, and $M$ can be reconstructed from this tree by taking 2-sums along all these edges.

Recall that for any two matroids $M_1$ and $M_2$ that share only one edge, the \emph{2-sum} $M_1 \oplus_2 M_2$ of $M_1$ and $M_2$ is the matroid whose edge set is the symmetric difference of those of $M_1$ and $M_2$ and whose circuits are of the following 3 types: circuits of $M_1$ avoiding $e$, circuits of $M_2$ avoiding $e$, and symmetric differences of an $M_1$-circuit containing $e$ with an $M_2$-circuit containing $e$. This construction is associative, in the sense that if $M_1$ and $M_2$ meet in only one edge and $M_2$ and $M_3$ meet in only one edge, and $M_1$ has no edge in common with $M_3$, then $(M_1 \oplus_2 M_2) \oplus_2 M_3 = M_1 \oplus_2 (M_2 \oplus_2 M_3)$. Because of this associativity, it doesn't matter in what order we take the 2-sums at the edges of the tree: we always get back the original matroid $M$.

Aigner-Horev, Diestel and Postle partially extended this result to infinite matroids: they were able to show that there is such a canonical tree decomposition of any connected matroid~\cite{ADP:decomposition}. It is a little surprising that the structure obtained is a genuine graph-theoretic tree, rather than one of the more order-theoretic or topological notions of infinite tree discussed in \cite{nikiel}.
However, reconstruction of the original matroid from this tree is not so straightforward if the tree is infinite. For example, every $Q$-matroid decomposes into a ray of pieces, each of which is isomorphic to $M(K_4)$, as in \autoref{fig:quircuits2}.

\begin{figure}
\begin{center}
 \includegraphics[width=8cm]{./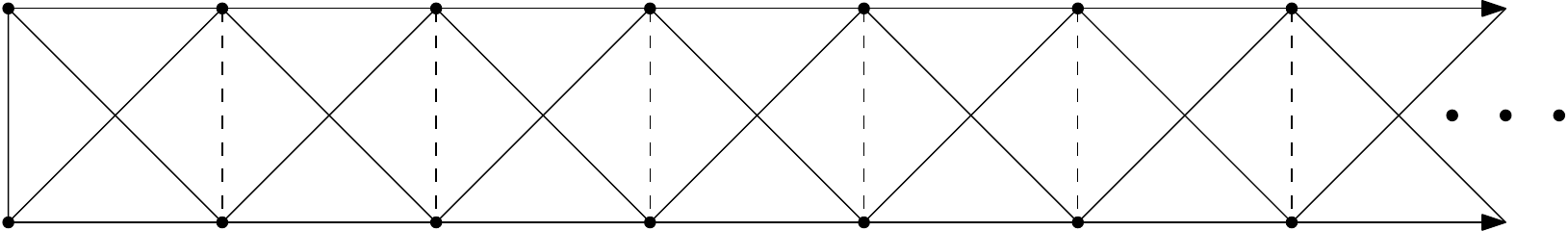}
\end{center} 
\caption{The tree decomposition of the graph $Q$}	\label{fig:quircuits2}
\end{figure}

This example shows that the tree decomposition alone does not provide enough information to reconstruct the matroid: more information is needed. 
In \cite{BCP:quircuits}, we answer the question of which extra information is needed to carry out this reconstruction.
The answer is complicated and beyond the scope of this paper.
However, if we once more restrict our attention to tame matroids then there is a much more natural solution.
In this paper, we give a self-contained account of this more natural solution, for which the necessary arguments are much simpler than in \cite{BCP:quircuits}.

The tree alone is still not enough information - the finite and topological cycle matroids of $Q$ are both tame, and they give rise to the same tree of matroids. Just as in the graphic case, we may think of the topological cycle matroid as the matroid we get by allowing the end of the ray to be used by circuits and the finite cycle matroid as the matroid we get by forbidding circuits to use the end. This suggests what is in fact the right answer: the extra information we need is simply the set $\Psi$ of ends of the tree which may be used by circuits.

More precisely, in \autoref{treesofmatroids} we give a construction which can be thought of as taking infinitely many 2-sums simultaneously. Given a suitable tree $\Tcal$ of matroids and a suitable set $\Psi$ of ends of $\Tcal$, we showed in~\cite{BC:psi_matroids} that this construction allows us to build a matroid $M_{\Psi}(\Tcal)$ by (roughly speaking) sticking together the matroids along the edges of the tree and only allowing circuits to use the ends in $\Psi$. We can show that this construction suffices to rebuild any tame matroid from its canonical decomposition into circuits, cocircuits, and 3-connected pieces, together with information about which ends are used by circuits:

\begin{thm}\label{intrec}
Let $N$ be a tame matroid and let $\Tcal$ be the tree of matroids $\Tcal$ arising from the canonical tree decomposition of $N$.

Then there is some $\Psi\se \Omega(\Tcal)$ such that
$N=M_{\Psi}(\Tcal)$.
\end{thm}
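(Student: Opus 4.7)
The plan is to take $\Psi\subseteq\Omega(\mathcal{T})$ to be the set of ends of $\mathcal{T}$ that are \emph{used} by at least one circuit of $N$, where a set $C$ of edges uses an end $\omega$ if, for every finite subtree $S\subseteq\mathcal{T}$, the component of $\mathcal{T}\setminus S$ corresponding to $\omega$ contains edges of $C$. With this choice, $N$ and $M_\Psi(\mathcal{T})$ share a ground set, so the theorem reduces to showing they have the same circuits.

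For the inclusion that every circuit of $N$ is a circuit of $M_\Psi(\mathcal{T})$, the key input is that each edge of $\mathcal{T}$ arises from a 2-separation of $N$, so circuits of $N$ cross these separations in exactly the way characteristic of the 2-sum. Using this one checks that any circuit $C$ of $N$ projects, at each node $t$ of $\mathcal{T}$, to a circuit of the node matroid at $t$ (with appropriate virtual edges inserted for the adjacent separations that $C$ crosses), and that these local circuits patch along $\mathcal{T}$ in the manner prescribed by the construction of $M_\Psi(\mathcal{T})$. That the ends used by $C$ lie in $\Psi$ is immediate from the definition.

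For the reverse inclusion, given a circuit $D$ of $M_\Psi(\mathcal{T})$, we must exhibit it as a circuit of $N$. By construction $D$ is locally a circuit at each node and uses only ends in $\Psi$, so for every end $\omega\in\Psi$ that $D$ uses there is a witness circuit $C_\omega$ of $N$ using $\omega$. The approach is to combine $D$ with these witnesses via an infinitary circuit-elimination argument, using tameness to keep the combinatorics under control: tameness forces the intersection of $D$ with every cocircuit of $N$ to be finite, so the elimination against any given cocircuit reduces to a finite 2-sum computation and can be arranged locally node by node along $\mathcal{T}$. Dually, one verifies that every cocircuit of $M_\Psi(\mathcal{T})$ is a cocircuit of $N$ via the same strategy, using that the $\Psi$-matroid construction interchanges circuits and cocircuits when $\Psi$ is complemented.

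I expect the main obstacle to be the patching step in the reverse inclusion: managing simultaneously all the ends used by $D$ and showing that the locally correct circuit data, together with the witness circuits $C_\omega$, really do assemble into a single circuit of $N$ equal to $D$. This is exactly the place where tameness has to do essential work, and where the argument breaks down for the wild $G$-matroids of \autoref{eg:quircuits}. Once this assembly is carried out, minimality of $D$ as a circuit of $N$ follows from the fact that the tree decomposition faithfully encodes the local circuit structure of every subset of edges, so a proper subset of $D$ would fail to be a circuit already locally.
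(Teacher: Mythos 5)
Your choice of $\Psi$ (the ends used by circuits of $N$) matches the paper's, and your forward direction -- that every $N$-circuit decomposes as a $\Psi$-precircuit of $\Tcal$ -- is essentially correct. But the proposal misses the crucial step that makes the whole argument go through, and your backward direction, as sketched, would not succeed.

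The key fact the paper proves first (and which you never address) is that \emph{no circuit and cocircuit of $N$ can have a common end of $T$ in their closures}. This is Lemma~\ref{ray_case} in the ray case, reduced from the general case by coarsening the decomposition along a ray. Without this, your choice of $\Psi$ is not even coherent: if some end $\omega$ were in the closure of both a circuit $o$ and a cocircuit $b$ of $N$, you would have $\omega \in \Psi$ and hence $b$ would fail to be a $\Psi\ct$-cocircuit of $\Tcal$, so the dual half of your claim ("every cocircuit of $M_\Psi(\Tcal)$ is a cocircuit of $N$, and conversely") could not hold. The paper's proof of this disjointness is a delicate, finite circuit-elimination argument along a path of the tree (using tameness to guarantee that a circuit and cocircuit meet only finitely often, so the argument can be localised), and is the mathematical heart of the theorem. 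Your proposal identifies tameness as the essential hypothesis but attaches it to a vague "infinitary circuit elimination" in the reverse inclusion, which is not where it is actually spent.

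Once the disjointness of circuit-ends and cocircuit-ends is established, the paper finishes not by directly matching up circuits via elimination (as you propose), but by an abstract argument: every $N$-circuit is a $\Psi$-circuit of $\Tcal$, every $N$-cocircuit is a $\Psi\ct$-cocircuit of $\Tcal$, no $\Psi$-circuit meets a $\Psi\ct$-cocircuit exactly once (Lemma~\ref{2SumsO1}), and then Lemma~\ref{cir_c_scrawl} immediately forces $N = M_\Psi(\Tcal)$. Your "patching" and "witness circuits $C_\omega$" machinery is replaced entirely by this soft scrawl/coscrawl argument. I would recommend you prove the disjointness-of-ends claim first; the rest then falls out almost for free and you will not need to assemble circuits end-by-end at all.
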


The proof that the $\Psi$-matroids of a locally finite graph really are matroids also relied on gluing together an infinite tree of finite pieces. In this case, the tree structure arose from a tree decomposition of the graph. So both of the results mentioned above say that, for some particular tree structure, if we have any tame matroid whose circuits and cocircuits all fit, in some sense, with that tree structure, then this constrains the matroid to be of a very special type, which we call a $\Psi$-matroid. We give a general result of this type for trees of matroids in which any two adjacent matroids share at most one edge. See \autoref{nice_tree}.

The paper is arranged as follows: we begin by recalling some preliminary results about infinite matroids in general and $\Psi$-matroids in particular in \autoref{prelims}. Then we recall the basic theory of trees of matroids, including the infinitary version of 2-sums, in \autoref{treesofmatroids}. We prove \autoref{intrec} in \autoref{rec} and \autoref{intpsi} in \autoref{sec:psi}. We conclude by discussing open questions and the future outlook in \autoref{outlook}.

\section{Preliminaries}\label{prelims}
Throughout, notation and terminology for (infinite) graphs are those of~\cite{DiestelBook10}, and for matroids those of~\cite{Oxley,matroid_axioms}.

\subsection{Infinite matroids}
$M$ always denotes a matroid and $E(M)$ (or just $E$), $\Ical(M)$ and $\Ccal(M)$ denote its ground 
set and its sets of independent sets and circuits, respectively. 

A set system $\Ical\subseteq \Pcal(E)$ is the set of independent sets of a matroid if and only if it satisfies the following {\em independence  axioms\/} \cite{matroid_axioms}.
\begin{itemize}
	\item[(I1)] $\emptyset\in \Ical(M)$.
	\item[(I2)] $\Ical(M)$ is closed under taking subsets.
	\item[(I3)] Whenever $I,I'\in \Ical(M)$ with $I'$ maximal and $I$ not maximal, there exists an $x\in I'\setminus I$ such that $I+x\in \Ical(M)$.
	\item[(IM)] Whenever $I\subseteq X\subseteq E$ and $I\in\Ical(M)$, the set $\{I'\in\Ical(M)\mid I\subseteq I'\subseteq X\}$ has a maximal element.
\end{itemize}

A set system $\Ccal\subseteq \Pcal(E)$ is the set of circuits of a matroid if and only if it satisfies the following {\em circuit  axioms\/} \cite{matroid_axioms}.
\begin{itemize}
\item[(C1)] $\emptyset\notin\Ccal$.
\item[(C2)] No element of $\Ccal$ is a subset of another.
\item[ (C3)](Circuit elimination) Whenever $X\subseteq o\in \Ccal(M)$ and $\{o_x\mid x\in X\} \subseteq \Ccal(M)$ satisfies $x\in o_y\Leftrightarrow x=y$ for all $x,y\in X$, 
then for every $z \in o\setminus \left( \bigcup_{x \in X} o_x\right)$ there exists a  $o'\in \Ccal(M)$ such that $z\in o'\subseteq \left(o\cup  \bigcup_{x \in X} o_x\right) \setminus X$.

\item[(CM)] $\Ical$ satisfies (IM), where $\Ical$ is the set of those subsets of $E$ not including an element of $\Ccal$.
\end{itemize}


\begin{lem}\label{fdt}
 Let $M$ be a matroid and $s$ be a base.
Let $o_e$ and $b_f$ a fundamental circuit and a fundamental cocircuit with respect to $s$, then
\begin{enumerate}
 \item $o_e\cap b_f$ is empty or $o_e\cap b_f=\{e,f\}$ and
\item $f\in o_e$ if and only if $e\in b_f$.
\end{enumerate}
\end{lem}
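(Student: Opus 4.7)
The plan is to reduce both parts of the lemma to the orthogonality property of matroids: for any circuit $C$ and any cocircuit $D$, $|C \cap D| \neq 1$. This is a standard consequence of the axioms (C1)--(C3) together with (CM) that I would take as known from \cite{matroid_axioms}; it applies in the infinite setting considered here.

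The first step is a set-theoretic computation. By the defining property of the fundamental circuit, $o_e \subseteq s \cup \{e\}$, and by the defining property of the fundamental cocircuit (applied in $M^*$ to the dual base $E \setminus s$), $b_f \subseteq (E \setminus s) \cup \{f\}$. Using $e \notin s$ and $f \in s$ (so in particular $e \neq f$), intersecting the two ambient sets yields $\{e, f\}$, and hence $o_e \cap b_f \subseteq \{e,f\}$. Orthogonality now rules out the two singleton cases, so $o_e \cap b_f$ is either $\emptyset$ or $\{e,f\}$, which is (1). Since $e \in o_e$ and $f \in b_f$ hold by definition, (2) is immediate: $f \in o_e$ iff $f \in o_e \cap b_f$ iff $o_e \cap b_f = \{e,f\}$ iff $e \in b_f$.

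The only real obstacle is orthogonality itself. If one wishes to avoid invoking it as a black box, the fundamental case needed here can be proved directly by base exchange: assuming $f \in o_e$, a short application of (C3) --- eliminating $f$ between $o_e$ and the fundamental circuit $o_x \subseteq s + x$ of any test element $x \notin (s \setminus \{f\}) \cup \{e\}$ --- shows that $s' := (s \setminus \{f\}) \cup \{e\}$ is a base of $M$. Then the identity $(E \setminus s') + e = (E \setminus s) + f$ identifies $b_f$ with the fundamental cocircuit of $e$ relative to $s'$, forcing $e \in b_f$; the converse direction follows by the duality $M \leftrightarrow M^*$, $s \leftrightarrow E \setminus s$, which swaps the roles of fundamental circuits and fundamental cocircuits and so turns the implication $f \in o_e \Rightarrow e \in b_f$ into its converse.
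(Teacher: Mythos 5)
Your proof is correct and follows essentially the same approach as the paper: bound $o_e \cap b_f$ inside $\{e,f\}$ via $o_e \subseteq s + e$ and $b_f \subseteq (E \setminus s) + f$, then apply orthogonality to eliminate the singleton cases, and derive (2) from (1). The only cosmetic difference is that you handle both directions of (2) simultaneously with a chain of equivalences, whereas the paper proves one implication directly and dualizes for the converse.
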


\begin{proof}
To see the first note that $o_e\subseteq s+e$ and $b_f\subseteq (E\setminus s)+f$.
So $o_e\cap b_f\subseteq \{e,f\}$. As a circuit and a cocircuit can never meet in only one edge, the assertion follows.

To see the second, first let $f\in o_e$.
Then $f \in o_e \cap b_f$, so by (1) $o_e \cap b_f = \{e, f\}$ and so $e \in b_f$.
The converse implication is the dual statement of the above implication.
\end{proof}

\begin{lem}\label{o_cap_b}
 For any circuit $o$ containing two edges $e$ and $f$, there is a cocircuit $b$ such that $o\cap b=\{e,f\}$.
\end{lem}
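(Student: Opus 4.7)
The plan is to exhibit $b$ as a fundamental cocircuit with respect to a cleverly chosen base, and then read off the desired intersection from \autoref{fdt}.

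First I would produce a base $s$ with $e\in s$ and $f\notin s$. Since $o$ is a circuit, the set $I:=o\sm\{f\}$ is independent and contains $e$. Applying (IM) with $X=E$ to $I$ yields a maximal independent set (i.e.\ a base) $s$ of $M$ with $I\se s$. In particular $e\in s$; and $f\notin s$, since otherwise $o\se s$, contradicting independence of $s$.

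Next I would identify $o$ as the fundamental circuit of $f$ with respect to $s$. By construction $o\se s+f$, and $s+f$ contains a unique circuit (the fundamental circuit $o_f$), so $o=o_f$. Now let $b_e$ be the fundamental cocircuit of $e$ with respect to $s$; this is well-defined because $e\in s$. Since $e\in o=o_f$, part (2) of \autoref{fdt} (with the roles of $e$ and $f$ swapped) gives $f\in b_e$. Thus $\{e,f\}\se o_f\cap b_e$, and part (1) of \autoref{fdt} forces $o\cap b_e=o_f\cap b_e=\{e,f\}$, so $b:=b_e$ is the required cocircuit.

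There is no real obstacle here: the only subtlety is making sure the ingredients needed to form a fundamental circuit and a fundamental cocircuit simultaneously (namely $f\notin s$ and $e\in s$) are both arranged, which is what motivates choosing $s$ to extend $o\sm\{f\}$ rather than an arbitrary independent set.
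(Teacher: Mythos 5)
Your proof is correct and is essentially the same as the paper's, just with the roles of $e$ and $f$ interchanged: the paper extends $o-e$ to a base and takes the fundamental cocircuit of $f$, whereas you extend $o-f$ to a base and take the fundamental cocircuit of $e$. Both reduce to \autoref{fdt} in the same way.
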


\begin{proof}
As $o-e$ is independent, there is a base including $o-e$.
By \autoref{fdt}, the fundamental cocircuit of $f$ of this base intersects $o$ in $e$ and $f$, as desired. \end{proof}

\begin{lem} \label{rest_cir}
 Let $M$ be a matroid with ground set $E = C \dot \cup X \dot \cup D$ and let $o'$ be a circuit of $M' = M / C \backslash D$.
Then there is an $M$-circuit $o$ with $o' \subseteq o \subseteq o' \cup C$.
\end{lem}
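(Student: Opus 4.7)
The plan is to choose a basis of $M\restric C$, lift the circuit $o'$ up through the contraction, and then use minimality of $o'$ to force the correct inclusions.

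First, I would use axiom (IM) to pick a maximal independent subset $B_C$ of $C$ in $M$; this is a basis of $M\restric C$. By the standard definition of contraction, for any $S\subseteq X$, $S$ is independent in $M/C$ if and only if $S\cup B_C$ is independent in $M$. Consequently, $S\subseteq X$ is dependent in $M/C\sm D = M'$ exactly when $S\cup B_C$ is dependent in $M$.

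Since $o'$ is an $M'$-circuit, the set $o'\cup B_C$ is dependent in $M$, so it contains some $M$-circuit $o$. The inclusion $o\subseteq o'\cup B_C\subseteq o'\cup C$ is then immediate. For the other inclusion $o'\subseteq o$, I would argue by contradiction: suppose there were an edge $e\in o'\sm o$. Then $o\subseteq (o'\sm\{e\})\cup B_C$, witnessing that $(o'\sm\{e\})\cup B_C$ is dependent in $M$, and hence that $o'\sm\{e\}$ is dependent in $M'$. This contradicts the minimality of $o'$ as an $M'$-circuit.

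There is no real obstacle here; the only thing to watch is that the characterisation of independence in $M/C$ via a single chosen basis $B_C$ does hold in this infinite setting, which is precisely where (IM) is needed to guarantee $B_C$ exists in the first place. Once that is in hand the argument is a one-line minimality check.
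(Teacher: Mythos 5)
Your proof is correct and is essentially the paper's own argument: both pick a base $s=B_C$ of $M\restric C$, observe that $o'\cup s$ is $M$-dependent while $(o'-e)\cup s$ is $M$-independent for each $e\in o'$, and conclude that the circuit inside $o'\cup s$ must contain all of $o'$. The only cosmetic difference is that you phrase the second observation as a contradiction argument rather than a direct one.
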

\begin{proof}
Let $s$ be any $M$-base of $C$. Then $s \cup o'$ is $M$-dependent since $o'$ is $M'$-dependent.
On the other hand,  $s \cup o'-e$ is $M$-independent whenever $e\in o'$ since $o'-e$ is $M'$-independent.
Putting this together yields that $s \cup o'$ contains an $M$-circuit $o$, and this circuit must not avoid any $e\in o'$, as desired.
\end{proof}

A \emph{scrawl} is a union of circuits. In \cite{BC:rep_matroids},
(infinite) matroids are axiomatised in terms of scrawls. 
The set $\Scal(M)$ denotes the set of scrawls of the matroid $M$.
Dually a \emph{coscrawl} is a union of cocircuits.
Since no circuit and cocircuit can meet in only one element,
no scrawl and coscrawl can meet in only one element. 
In fact, this property gives us a simple characterisation of scrawls in terms of coscrawls and vice versa.

\begin{lem}\label{is_scrawl}\cite{BC:rep_matroids}
Let $M$ be a matroid, and let $w\subseteq E$. The following are equivalent:
\begin{enumerate}
 \item $w$ is a scrawl of $M$.
 \item $w$ never meets a cocircuit of $M$ just once.
 \item $w$ never meets a coscrawl of $M$ just once.
\end{enumerate}
\end{lem}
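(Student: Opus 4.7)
The plan is to prove the cycle (1) $\Rightarrow$ (2) $\Rightarrow$ (3) $\Rightarrow$ (1). The first two implications are short. For (1) $\Rightarrow$ (2), if $w=\bigcup_i o_i$ is a union of circuits and some cocircuit $b$ meets $w$ in only $e$, then the $o_i$ containing $e$ satisfies $o_i\cap b=\{e\}$, contradicting the standard fact (used already in the proof of \autoref{fdt}) that a circuit and a cocircuit cannot meet in exactly one edge. For (2) $\Rightarrow$ (3), if a coscrawl $c$ meets $w$ in only $e$, write $c=\bigcup_j b_j$ and pick $j$ with $e\in b_j$; then $w\cap b_j\subseteq\{e\}$ and contains $e$, so $w\cap b_j=\{e\}$, violating (2).

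The substantive direction is (3) $\Rightarrow$ (1). I would show that every $e\in w$ lies in some $M$-circuit $o_e\subseteq w$; then $w=\bigcup_{e\in w} o_e$ is a scrawl by definition. Consider the restriction $N=M\setminus(E\setminus w)$, whose circuits are precisely the $M$-circuits contained in $w$. Suppose for contradiction that some $e\in w$ lies in no $N$-circuit. A standard matroid argument (if $e$ were absent from some $N$-base $B$, the dependent set $B+e$ would contain a circuit, which must use $e$) places $e$ in every $N$-base, equivalently makes $e$ a loop of $N^*$, so $\{e\}$ is a circuit of $N^*=M^*/(E\setminus w)$. Now apply \autoref{rest_cir} to $M^*$ with $C=E\setminus w$ and $D=\emptyset$: the $N^*$-circuit $\{e\}$ lifts to an $M^*$-circuit $o$ satisfying $\{e\}\subseteq o\subseteq\{e\}\cup(E\setminus w)$. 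Translating back, $o$ is an $M$-cocircuit with $o\cap w=\{e\}$, contradicting (2) (which follows from (3) since every cocircuit is itself a coscrawl).

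The main care needed is the infinite-matroid bookkeeping in the last paragraph: using the duality $(M\setminus D)^*=M^*/D$ to identify cocircuits of $N$ with circuits of $N^*$, and invoking \autoref{rest_cir} to lift such a circuit back to $M^*$. Neither step is a real obstacle -- the first is part of the standard duality framework for infinite matroids, and the second is exactly the content of \autoref{rest_cir} -- but they must be chained correctly. Once assembled, the proof collapses to the observation that an element of $w$ lying in no circuit of $w$ would produce an $M$-cocircuit meeting $w$ in that single element, which is precisely the behaviour (3) forbids.
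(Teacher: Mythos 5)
Your proof is correct and uses essentially the same key ingredients as the paper's: reduce the substantive implication to showing that every $e\in w$ lies in a circuit inside $w$, and derive this via \autoref{rest_cir} and its dual. The paper packages it a bit more tightly by passing to the one-element minor $M/(w-e)\setminus(E\setminus w)$, where $e$ is forced to be either a loop or a coloop, whereas you work with the restriction $M\setminus(E\setminus w)$ and an explicit coloop argument -- a cosmetic difference, not a different route.
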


\begin{proof}
It is clear that (1) implies (3) and (3) implies (2), so it suffices to show that (2) implies (1). 
Suppose that (2) holds and let $e\in w$. Then in the minor $M/(w - e)\setminus(E \setminus w)$ on the groundset $\{e\}$, $e$ cannot be a co-loop, by the dual of \autoref{rest_cir} and (2). So $e$ must be a loop, and by \autoref{rest_cir} there is a circuit $o_e$ with $e \in o_e \subseteq w$. Thus $w$ is the union of the $o_e$, and so is a scrawl.

\end{proof}

\begin{lem}\label{cir_c_scrawl}
 Let $M$ be a matroid and $\Ccal,\Dcal\se \Pcal(E)$
such that every $M$-circuit is a union of elements of $\Ccal$, every $M$-cocircuit is a union of elements of $\Dcal$ and $|C\cap D|\neq 1$ for every $C\in\Ccal$ and every $D\in\Dcal$.

Then $\Ccal(M)\se \Ccal\se \Scal(M)$ and  $\Ccal(M^*)\se \Dcal\se \Scal(M^*)$
\end{lem}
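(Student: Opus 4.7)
The proof plan is to establish the two ``sandwich'' inclusions for $\Ccal$, after which the inclusions for $\Dcal$ follow by the symmetry of the hypothesis (swap $M\leftrightarrow M^*$ and $\Ccal\leftrightarrow\Dcal$; this preserves both union-cover conditions since $M^*$-circuits are $M$-cocircuits, and the intersection condition is literally symmetric).

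First I would show $\Ccal\se\Scal(M)$. Pick $C\in\Ccal$; by \autoref{is_scrawl}, it suffices to verify that $C$ meets no $M$-cocircuit in exactly one edge. If $C\cap b=\{e\}$ for some $M$-cocircuit $b$, then write $b=\bigcup_i D_i$ with $D_i\in\Dcal$. The edge $e$ lies in some $D_{i_0}$, and then $C\cap D_{i_0}\subseteq C\cap b=\{e\}$ while $e\in C\cap D_{i_0}$, so $|C\cap D_{i_0}|=1$, contradicting the hypothesis.

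Second I would show $\Ccal(M)\se\Ccal$. Let $o$ be an $M$-circuit. By hypothesis we may write $o=\bigcup_{i\in I}C_i$ with $C_i\in\Ccal$. By the first step each $C_i$ is a scrawl of $M$, so $C_i=\bigcup_{j}o_{ij}$ for some $M$-circuits $o_{ij}$. Since $o_{ij}\subseteq C_i\subseteq o$ and $o$ is itself an $M$-circuit, axiom (C2) forces $o_{ij}=o$. Consequently each $C_i$ is either empty or equal to $o$. Since $o\neq\emptyset$, at least one $C_i$ equals $o$, giving $o\in\Ccal$.

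No step looks like a serious obstacle: the first uses \autoref{is_scrawl} essentially as a definition, the second is a single application of the antichain property (C2) to collapse any circuit inside the putative covering, and the $\Dcal$-side is a clean dualisation. The only thing requiring a moment of care is noticing that a ``scrawl'' may be the empty union of circuits, so that the final step must observe $o\neq\emptyset$ to extract a nontrivial $C_i$.
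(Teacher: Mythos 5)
Your proof is correct, and it organises the argument differently from the paper's. The paper first proves $\Ccal(M)\se\Ccal$ (and dually $\Ccal(M^*)\se\Dcal$) using \autoref{o_cap_b}: given a circuit $o$, an $o'\in\Ccal$ with $e\in o'\se o$, and a hypothetical $f\in o\sm o'$, one takes a cocircuit $b$ with $o\cap b=\{e,f\}$ and a $b'\in\Dcal$ with $e\in b'\se b$ to obtain $|o'\cap b'|=1$; only afterwards does it deduce $\Ccal\se\Scal(M)$ via \autoref{is_scrawl}, now exploiting the already-proved fact that every cocircuit is itself an element of $\Dcal$. You reverse the order: you establish $\Ccal\se\Scal(M)$ first, using \autoref{is_scrawl} together with a covering of each cocircuit by $\Dcal$ (which does not require $\Ccal(M^*)\se\Dcal$), and then derive $\Ccal(M)\se\Ccal$ from the scrawl inclusion plus the antichain axiom (C2), since a scrawl contained in a circuit must be either empty or that whole circuit. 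Your route therefore dispenses with \autoref{o_cap_b} altogether and is somewhat more self-contained within the lemma; the one detail you rightly flag is that $o\neq\emptyset$ is needed to extract a nonempty $C_i$. Both proofs are fine, and the dualisation to the $\Dcal$ side is handled the same way in each.
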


\begin{proof}
We begin by showing that $\Ccal(M) \se \Ccal$. For any circuit $o$ of $M$, pick an element $e$ of $o$. Since $o$ is a union of elements of $\Ccal$ there is $o' \in \Ccal$ with $e \in o' \se o$. Suppose for a contradiction that $o'$ isn't the whole of $o$, so that there is $f \in o \sm o'$. By \autoref{o_cap_b} there is some cocircuit $b$ of $M$ with $o' \cap b = \{e\}$. Then we can find $b' \in \Dcal$ with $e \in b' \se b$, and so $o' \cap b' = \{e\}$, giving the desired contradiction. Similarly we obtain that $\Ccal(M^*) \se \Dcal$.

The fact that $\Ccal \se \Scal(M)$ is immediate from \autoref{is_scrawl} since $\Ccal(M^*) \se \Dcal$, and the proof that $\Dcal \se \Scal(M^*)$ is similar.
\end{proof}

\subsection{$\Psi$-matroids}

In this subsection we shall review the definitions of $\Psi$-circuits and $\Psi^{\complement}$-bonds for a locally finite graph $G$ with a specified set $\Psi$ of ends. Much of what we say will be a review of the early parts of~\cite{DP:dualtrees} and~\cite{BC:psi_matroids}.
We say that two rays in $G$ are {\em equivalent} if they cannot be separated by removing finitely many vertices from $G$. In other words, two rays are equivalent if they may be joined by infinitely many vertex-disjoint paths. An {\em end} of $G$ is an equivalence class of rays under this relation, and the set of ends of $G$ is denoted $\Omega(G)$.

Let $d$ be the distance function on $V(G) \sqcup (0, 1) \times E(G)$ considered as the ground set of the simplicial 1-complex formed from the vertices and edges of $G$. We define a topology on the set $V(G) \sqcup \Omega(G) \sqcup (0,1) \times E(G)$ by taking basic open neighbourhoods as follows:
\begin{itemize}
\itum For $v \in V(G)$, the basic open neighbourhoods of $v$ are the $\epsilon$-balls $B_{\epsilon}(v) = \{x | d(v, x) < \epsilon\}$ for $\epsilon \leq 1$.
\itum For $(x, e) \in (0, 1) \times E$ we say $(x, e)$ is an {\em interior point} of $e$, and take the basic open neighbourhoods to be the $\epsilon$-balls about $(x, e)$ with $\epsilon \leq \min(x, 1-x)$.
\itum For $\omega \in \Omega(G)$, the basic open neighbourhoods of $\omega$ will be parametrised by the finite subsets $S$ of $V(G)$. Given such a subset, we let $C(S, \omega)$ be the unique component of $G - S$ that contains a ray from $\omega$, and let $\hat C(S, \omega)$ be the set of all vertices and inner points of edges contained in or incident with $C(S, \omega)$, and of all ends represented by a ray in $C(S, \omega)$. We take the basic open neighbourhoods of $\omega$ to be the sets $\hat C(S, \omega)$.
\end{itemize}

We call the topological space obtained in this way $|G|$. 

For any set $\Psi$ of ends of $G$, we set $\Psi^{\complement} = \Omega(G) \setminus \Psi$ and $|G|_{\Psi} = |G| \setminus \Psi^{\complement}$. Since $G$ is locally finite, $|G|_{\Psi}$ can be given the structure of a graph-like space in the sense of~\cite{BCC:graphic_matroids} (and closely related to the earlier work of~\cite{ThomassenVellaContinua}): see~\cite{BC:psi_matroids}.

A \emph{$\Psi$-circuit} of $G$ is an edge set whose
$|G|_{\Psi}$-closure is homeomorphic to the unit circle.
A \emph{$\Psi\ct$-bond} of $G$ is a bond of $G$ that has no ends from $\Psi$ in its closure.

We say that \emph{$(G,\Psi)$ induces a matroid $M$} if $E(M)=E(G)$ and the $M$-circuits are the $\Psi$-circuits and the $M$-cocircuits are the $\Psi\ct$-bonds. In this case, we call $M$ the {\em $\Psi$-matroid} $M_{\Psi}(G)$ of $G$. In the following sense the $\Psi$-circuits and $\Psi\ct$-bonds always behave like the circuits and cocircuits of a matroid.

\begin{lem}\label{never_meet_just_once}
 No $\Psi$-circuit meets any $\Psi\ct$-bond in a single edge.
\end{lem}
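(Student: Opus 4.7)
The plan is to derive a contradiction by producing a topological separation of the space $X := |G|_\Psi \setminus \bigcup_{e' \in b} \mathring{e'}$. Suppose for contradiction that a $\Psi$-circuit $o$ and a $\Psi\ct$-bond $b$ meet in exactly one edge $e$. Let $(V_1,V_2)$ be the vertex bipartition determined by $b$, so $G[V_1]$ and $G[V_2]$ are connected and $b$ consists of the $V_1$–$V_2$ crossing edges; let $u \in V_1$ and $v \in V_2$ be the endpoints of $e$. The $|G|_\Psi$-closure $\overline{o}$ is a topological circle, and $\overline{o}$ has no edge-interior points outside $o$; since $o - e$ is disjoint from $b$, the closed arc $A := \overline{o} \setminus \mathring{e}$ (with endpoints $u$ and $v$) lies inside $X$. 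It therefore suffices to split $X$ into two disjoint open sets, one containing $u$ and the other containing $v$.

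For each $\omega \in \Psi$, the hypothesis that $b$ is a $\Psi\ct$-bond furnishes a finite $S_\omega \se V(G)$ such that $\hat C(S_\omega,\omega)$ contains no inner point of a $b$-edge; equivalently, no $b$-edge is incident with $C(S_\omega,\omega)$. Since $C(S_\omega,\omega)$ is connected in $G$ and every $V_1$–$V_2$ edge is in $b$, the component $C(S_\omega,\omega)$ is contained entirely in one $V_i$, giving a well-defined partition $\Psi = \Psi_1 \sqcup \Psi_2$ by side. I would then define $U_i := V_i \cup \bigcup_{e' \in E(G[V_i])} \mathring{e'} \cup \Psi_i$ for $i = 1, 2$; these are manifestly disjoint, their union is $X$, and $u \in U_1$, $v \in U_2$.

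The main obstacle, and the heart of the proof, is showing that $U_1$ and $U_2$ are open in $X$. Openness at interior edge-points is immediate. At a vertex $w \in V_i$, a sufficiently small $\epsilon$-ball meets $X$ in $w$ together with initial segments of the edges at $w$ not in $b$; each such edge has both endpoints in $V_i$, hence lies in $E(G[V_i])$, so the ball sits in $U_i$. At an end $\omega \in \Psi_i$, I would use the basic neighbourhood $\hat C(S_\omega,\omega)$: its vertices lie in $C(S_\omega,\omega) \se V_i$; its inner edge-points come from edges contained in or incident with $C(S_\omega,\omega)$, which (as no $b$-edge is incident with $C(S_\omega,\omega)$) all lie in $E(G[V_i])$; and any end of $\Psi$ represented by a ray in $C(S_\omega,\omega)$ lies in $\Psi_i$. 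Hence $\hat C(S_\omega,\omega) \cap X \se U_i$, proving openness. Once $U_1$ and $U_2$ are open, the connected arc $A \se X = U_1 \sqcup U_2$ with $u \in U_1$ and $v \in U_2$ yields the desired contradiction.
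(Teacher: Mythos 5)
Your proposal is correct and takes essentially the same approach as the paper: the paper's proof simply asserts that $|G|_\Psi$ with the interior points of $b$-edges removed falls into two connected pieces and then derives the contradiction from the connectedness of the arc $\overline{o}\setminus\mathring{e}$, whereas you supply the careful topological justification of that disconnection, including the crucial use of the defining property of a $\Psi\ct$-bond to find, for each $\omega\in\Psi$, a neighbourhood $\hat C(S_\omega,\omega)$ disjoint from $b$.
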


\begin{proof}
Suppose for a contradiction that some $\Psi$-circuit $o$ meets some $\Psi\ct$-bond $b$ in a single edge $f$

Then $|G|_\Psi$ with all the interior points of edges of $b$ removed has two connected components, namely the two sides of the bond.  
This contradicts the fact that $o-f$ is connected and contains both endvertices of $f$.
\end{proof}

More is shown in~\cite{BC:psi_matroids}. If $\Psi$ is Borel, then the $\Psi$-matroid $M_{\Psi}(G)$ always exists.

\section{Trees of matroids}\label{treesofmatroids}

In this section we review the relationship between trees of matroids and tree decompositions of matroids.

\begin{dfn}
A {\em tree $\Tcal$ of matroids} consists of a tree $T$, together with a function $M$ assigning to each node $t$ of $T$ a matroid $M(t)$ on ground set $E(t)$, such that for any two nodes $t$ and $t'$ of $T$, if $E(t) \cap E(t')$ is nonempty then $tt'$ is an edge of $T$.

For any edge $tt'$ of $T$ we set $E(tt') = E(t) \cap E(t')$. We also define the {\em ground set} of $\Tcal$ to be $E = E(\Tcal) = \left(\bigcup_{t \in V(T)} E(t)\right) \setminus \left(\bigcup_{tt' \in E(T)} E(tt')\right)$. 

We shall refer to the edges which appear in some $E(t)$ but not in $E$ as {\em dummy edges} of $M(t)$: thus the set of such dummy edges is $\bigcup_{tt' \in E(T)} E(tt')$.
\end{dfn}

The idea is that the dummy edges are to be used only to give information about how the matroids are to be pasted together, but they will not be present in the final pasted matroid, which will have ground set $E(\Tcal)$. 

\begin{dfn}
 If $T$ is a tree, and $tu$ is a (directed) edge of $T$, we take $T_{t \to u}$ to be the connected component of $T - t$ that contains $u$. If $\Tcal = (T, M)$ is a tree of matroids, we take $\Tcal_{t \to u}$ to be the tree of matroids $(T_{t \to u}, M \restric_{T_{t \to u}})$.
\end{dfn}

\begin{dfn}
A tree $\Tcal = (T, M)$ of matroids is {\em of overlap 1} if, for every edge $tt'$ of $T$, $|E(tt')| = 1$. In this case, we denote the unique element of $E(tt')$ by $e(tt')$.

Given a tree of matroids of overlap 1 as above, a {\em precircuit} of $\Tcal$ consists of a connected subtree $C$ of $T$ together with a function $o$ assigning to each vertex $t$ of $C$ a circuit of $M(t)$, such that for any vertex $t$ of $C$ and any vertex $t'$ adjacent to $t$ in $T$, $e(tt') \in o(t)$ if and only if $t' \in C$. Given a set $\Psi$ of ends of $T$, such a precircuit is called a {\em $\Psi$-precircuit} if all ends of $C$ are in $\Psi$. The set of $\Psi$-precircuits is denoted $\overline\Ccal(\Tcal, \Psi)$. 

Any $\Psi$-precircuit $(C, o)$ has an {\em underlying set} $\underline{(C, o)} = E \cap \bigcup_{t \in V(C)} o(t)$. Minimal nonempty subsets of $E$ arising in this way are called {\em $\Psi$-circuits} of $\Tcal$. The set of $\Psi$-circuits of $\Tcal$ is denoted $\Ccal(\Tcal, \Psi)$.
\end{dfn}

\begin{dfn}
Let $\Tcal = (T, M)$ be a tree of matroids. Then the {\em dual} $\Tcal^*$ of $\Tcal$ is given by $(T, M^*)$, where $M^*$ is the function sending $t$ to $(M(t))^*$. For a subset $C$ of the ground set, the tree of matroids $\Tcal/C$ obtained from  $\Tcal$ by {\em contracting} $C$ is given by $(T, M/C)$, where $M/C$ is the function sending $t$ to $M(t)/(C \cap E(t))$. For a subset $D$ of the ground set, the tree of matroids $\Tcal\backslash D$ obtained from  $\Tcal$ by {\em deleting} $D$ is given by $(T, M \backslash D)$, where $M \backslash D$ is the function sending $t$ to $M(t) \backslash (C \cap E(t))$. 
\end{dfn}

\begin{lem}
For any tree $\Tcal$ of matroids, $\Tcal = \Tcal^{**}$. For any disjoint subsets $C$ and $D$ of the ground set of $\Tcal$ we have $(\Tcal / C)^* = \Tcal^* \backslash C$, $(\Tcal \backslash D)^* = \Tcal^* / D$ and $\Tcal / C \backslash D = \Tcal \backslash D /C$. If $\Tcal$ has overlap 1 and $(\Tcal, \Psi)$ induces a matroid $M$, then $(\Tcal/C\backslash D, \Psi)$ induces the matroid $M /C \backslash D$ and $(\Tcal^*, \Psi\ct)$ induces the matroid $M^*$. \qed
\end{lem}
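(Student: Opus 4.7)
The plan is to break this composite lemma into three groups: four pointwise algebraic identities, a duality-of-induction claim that is essentially tautological, and a minor statement for which the real work lies.

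Each identity $\Tcal = \Tcal^{**}$, $(\Tcal/C)^* = \Tcal^*\backslash C$, $(\Tcal\backslash D)^* = \Tcal^*/D$, and $\Tcal/C\backslash D = \Tcal\backslash D/C$ is an equality of trees of matroids with common underlying tree $T$, so reduces at each node $t$ to a standard matroid identity applied to $M(t)$ with $X = C\cap E(t)$ and $Y = D\cap E(t)$: namely $M(t)^{**} = M(t)$, $(M(t)/X)^* = M(t)^*\backslash X$, $(M(t)\backslash Y)^* = M(t)^*/Y$, and the commutativity of disjoint contraction and deletion. The claim that $(\Tcal^*, \Psi\ct)$ induces $M^*$ is then immediate: ``$(\Tcal,\Psi)$ induces $M$'' asserts that the $M$-circuits are the $\Psi$-circuits of $\Tcal$ and the $M$-cocircuits are the $\Psi\ct$-circuits of $\Tcal^*$, which is symmetric under the swap $(\Tcal,\Psi,M) \leftrightarrow (\Tcal^*,\Psi\ct,M^*)$ via $\Tcal^{**} = \Tcal$, $(\Psi\ct)\ct = \Psi$, and $M^{**} = M$.

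For $(\Tcal/C\backslash D, \Psi)$ inducing $M/C\backslash D$, the algebraic identities together with the duality just established reduce us to showing $(\Tcal/C,\Psi)$ induces $M/C$. In the forward direction, any $M/C$-circuit $o'$ lifts via \autoref{rest_cir} to an $M$-circuit $o$ with $o' \se o \se o' \cup C$; a $\Psi$-precircuit witnessing $o$ in $\Tcal$ descends (again using \autoref{rest_cir} at each node) to a $\Psi$-precircuit of $\Tcal/C$ with underlying set $o'$. Conversely, given a $\Psi$-precircuit $(C_1, o_1)$ of $\Tcal/C$, one lifts each $o_1(t)$ node-by-node via \autoref{rest_cir} to a circuit of $M(t)$ that differs from $o_1(t)$ only in edges of $C$, and iteratively extends $C_1$ across any dummy edges in $C$ demanded by the compatibility condition, obtaining a $\Psi$-precircuit of $\Tcal$ whose underlying set restricted to $E(\Tcal/C)$ equals $\underline{(C_1,o_1)}$. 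This shows $\underline{(C_1,o_1)}$ contains an $M/C$-circuit, and minimality of $\Psi$-circuits of $\Tcal/C$ gives equality. The cocircuit matching is dual.

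The main obstacle is the iterative lifting in the converse direction: one must verify that the extension procedure produces a bona fide precircuit of $\Tcal$, i.e.\ that the compatibility condition continues to hold at every newly added node and that no inconsistency arises. This works because each lifted node-circuit is uniquely determined on the non-contracted edges and can only gain edges from $C$, so the subtree grows deterministically in response to the dummy edges appearing in the lifted circuits, and the fact that $(\Tcal,\Psi)$ induces $M$ guarantees that the union of these pieces really is a $\Psi$-precircuit in $\Tcal$.
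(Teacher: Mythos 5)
The algebraic identities and the duality claim you dispose of correctly; those really are immediate node-by-node, and ``$(\Tcal,\Psi)$ induces $M$'' dualises tautologically. The substance lies in the minor statement, and there your forward direction has a genuine gap.

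Your converse direction is essentially sound, though your concern about ``iteratively extending $C_1$ across dummy edges in $C$'' is moot: $C$ and $D$ are, by definition, subsets of the ground set $E(\Tcal)$, which excludes every dummy edge, so lifting each $o_1(t)$ via \autoref{rest_cir} yields a circuit $\tilde o(t)$ of $M(t)$ agreeing with $o_1(t)$ on all dummy edges. The subtree $C_1$ does not grow, and $(C_1, \tilde o)$ is already a $\Psi$-precircuit of $\Tcal$ with no iteration required.

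The forward direction misapplies \autoref{rest_cir}, which lifts circuits out of a minor and does not push them down. Given the $\Psi$-precircuit $(S, \hat o)$ of $\Tcal$ representing the lifted $M$-circuit $o$, the set $\hat o(t) \setminus C$ is dependent in $M(t)/(C\cap E(t))\backslash(D\cap E(t))$ but need not be a circuit. For instance, if $M(t) = M(K_4)$ and $\hat o(t)$ is a triangle one of whose edges is the dummy edge, while $C$ meets $E(t)$ in two of the three edges joining the fourth vertex to that triangle, then after contraction one edge of $\hat o(t)$ becomes a loop and the other two a parallel pair, so $\hat o(t)\setminus C$ decomposes into two circuits. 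Shrinking $\hat o(t)\setminus C$ to a single circuit may discard exactly the dummy edges needed for precircuit compatibility at adjacent nodes, so the family $(S, t \mapsto \hat o(t)\setminus C)$ is in general not a precircuit and cannot be repaired node-by-node in isolation. To close the gap you need a genuine tree-walk: fix $e \in o'$ and the node $t_e \in S$ with $e \in E(t_e)$; choose a circuit of $M(t_e)/C\backslash D$ through $e$ inside the scrawl $\hat o(t_e)\setminus C$; then recursively, at each dummy edge appearing in the circuit just chosen, pass to the corresponding neighbour in $S$ and choose a circuit of its minor through that dummy edge inside the inherited scrawl. The subtree so built lies inside $S$, hence its ends lie in $\Psi$; and, using your converse direction together with \autoref{is_scrawl}, the underlying set of the resulting $\Psi$-precircuit of $\Tcal/C\backslash D$ is a nonempty scrawl of $M/C\backslash D$ contained in the circuit $o'$ and so must equal $o'$. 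This shrinking tree-walk is the mechanism your sketch elides, and without it the forward direction does not go through.
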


We will sometimes use the expression {\em $\Psi\ct$-cocircuits of $\Tcal$} for the $\Psi\ct$-circuits of $\Tcal^*$. If there is a matroid whose circuits are the  $\Psi$-circuits of $\Tcal$ and whose cocircuits are the $\Psi\ct$-cocircuits of $\Tcal$ then we will call that matroid the {\em $\Psi$-matroid} for $\Tcal$, and denote it $M_{\Psi}(\Tcal)$.

\begin{lem}[Lemma 5.5,~\cite{BC:psi_matroids}]\label{2SumsO1}
Let $\Tcal = (T, M)$ be a tree of matroids of overlap 1, $\Psi$ a set of ends of $T$, and let $(C, o)$ and $(D, b)$ be respectively a $\Psi$-precircuit of $\Tcal$ and a $\Psi\ct$-precircuit of $\Tcal^*$. Then $|\underline{(C, o)} \cap \underline{(D, b)}| \neq 1$.
\end{lem}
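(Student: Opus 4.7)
The plan is to argue by contradiction: suppose $\underline{(C, o)} \cap \underline{(D, b)} = \{f\}$ for a single edge $f$, and build an infinite ray in $V(C) \cap V(D)$ whose end lies in both $\Psi$ and $\Psi\ct$, which is absurd. Since $f$ is a non-dummy (real) edge of the ground set, it appears in $E(t)$ for exactly one node $t$; call it $t_0$. The conditions $f \in o(t_0)$ and $f \in b(t_0)$ force $t_0 \in V(C) \cap V(D)$.

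The next step is to analyse $o(t) \cap b(t)$ at any node $t \in V(C) \cap V(D)$ that we reach. The dummy edges of $M(t)$ are exactly the $e(tt')$ for $t'$ a neighbour of $t$ in $T$; by the defining property of (co)precircuits, $e(tt') \in o(t) \cap b(t)$ iff $t' \in V(C) \cap V(D)$. Any real edge $f' \in E(t)$ lying in both $o(t)$ and $b(t)$ would lie in $\underline{(C,o)} \cap \underline{(D,b)}$, forcing $f' = f$ and hence $t = t_0$. Thus, apart from possibly $f$ (only at $t = t_0$), $o(t) \cap b(t)$ consists precisely of dummy edges $e(tt')$ with $t' \in N(t) \cap V(C) \cap V(D)$. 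Since $o(t)$ is a circuit and $b(t)$ a cocircuit of $M(t)$, they cannot meet in exactly one element (standard fact used earlier in the paper). Consequently their intersection has size $\geq 2$.

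Now I iterate. Starting from $t_0$, the intersection $o(t_0) \cap b(t_0)$ contains $f$ and therefore at least one more element, which must be a dummy edge $e(t_0 t_1)$ with $t_1 \in V(C) \cap V(D)$. Having reached $t_n$ via the edge $e(t_{n-1} t_n)$, the intersection $o(t_n) \cap b(t_n)$ contains $e(t_{n-1} t_n)$ and therefore at least one more element; by the analysis above this must be another dummy edge $e(t_n t_{n+1})$ with $t_{n+1} \in N(t_n) \cap V(C) \cap V(D)$, and since $T$ is a tree, $t_{n+1} \neq t_{n-1}$ is automatic from $t_{n+1} \neq t_{n-1}$ once we observe we need to avoid backtracking --- and indeed $e(t_{n-1} t_n)$ is already accounted for, so the extra element corresponds to a \emph{different} neighbour $t_{n+1} \neq t_{n-1}$. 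This yields a ray $t_0, t_1, t_2, \ldots$ lying entirely in $V(C) \cap V(D)$.

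This ray represents an end $\omega$ of $C$ and an end of $D$ (viewed as subtrees of $T$); both identify with the same end of $T$. Since $(C, o)$ is a $\Psi$-precircuit we have $\omega \in \Psi$, and since $(D, b)$ is a $\Psi\ct$-precircuit we have $\omega \in \Psi\ct$, the desired contradiction. The main obstacle, and the only non-mechanical point, is the bookkeeping at step $t_n$: ensuring that the ``extra'' element of $o(t_n) \cap b(t_n)$ really produces a \emph{new} node, rather than the node $t_{n-1}$ we just came from. This is handled because $e(t_{n-1} t_n)$ is itself one element of the intersection and we need a second; that second element, being a dummy edge, must correspond to a neighbour of $t_n$ distinct from $t_{n-1}$, so the sequence indeed forms an infinite ray in the tree $T$.
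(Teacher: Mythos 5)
The lemma is cited from~\cite{BC:psi_matroids} (Lemma 5.5), so this paper does not itself supply a proof to compare against. Your argument is correct and is the natural one: at each node $t\in V(C)\cap V(D)$, the set $o(t)\cap b(t)$ is nonempty (it contains a dummy edge from the previous step, or $f$ at $t_0$) and, being the intersection of an $M(t)$-circuit with an $M(t)$-cocircuit, must have size at least $2$; the extra element cannot be a real edge other than $f$, so it is a dummy edge pointing to a fresh neighbour in $V(C)\cap V(D)$, and because $T$ is a tree this never backtracks, yielding a ray whose end would have to lie in both $\Psi$ and $\Psi\ct$. This is almost certainly the same propagation-to-an-end argument used in the cited reference.
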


\begin{thm}[\cite{BC:psi_matroids}]\label{really_matroids}
If $\Tcal = (T, M)$ is a tree of matroids of overlap 1, and $\Psi$ is a Borel set of ends of $T$, then there is a $\Psi$-matroid for $\Tcal$.
\end{thm}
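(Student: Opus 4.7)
The plan is to verify the circuit axioms (C1)--(C3) and (CM) for $\Ccal(\Tcal, \Psi)$ and then identify the resulting matroid's cocircuits with the $\Psi\ct$-circuits of $\Tcal^*$ via orthogonality. (C1) and (C2) are immediate from the defining minimality of $\Psi$-circuits as minimal nonempty underlying sets of $\Psi$-precircuits.

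For (C3), take $\Psi$-precircuits $(C, f)$ and $(C_x, f_x)$ witnessing $o$ and the $o_x$. At each node $t$ visited by any of these precircuits, apply circuit elimination in the matroid $M(t)$ to the local circuits $f(t)$ and $\{f_x(t) \mid t \in C_x\}$, eliminating the elements of $X \cap E(t)$ and retaining $z$ if $z \in E(t)$. The subtle point is making the local eliminations compatible along each tree-edge: at $tt'$ the new local circuit at $t$ should contain the dummy edge $e(tt')$ iff the new local circuit at $t'$ does. This is essentially bookkeeping for each edge, and the resulting connected support-subtree $C'$ is contained in $C \cup \bigcup_x C_x$, so all its ends lie in $\Psi$ automatically, making the constructed $(C', f')$ a $\Psi$-precircuit whose underlying set contains $z$ and avoids $X$.

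The hardest part is (CM). Given $I \subseteq X \subseteq E$ with $I$ containing no $\Psi$-circuit, I want a maximal element of $\{I' \mid I \subseteq I' \subseteq X, I' \text{ contains no }\Psi\text{-circuit}\}$. The obstacle to a naive Zorn's lemma argument is that a union of $\Psi$-circuit-free sets along a chain can fail to be $\Psi$-circuit-free once the offending circuit is infinite. To sidestep this, I would phrase ``some candidate $I^* \supseteq I$ in $X$ still contains a $\Psi$-circuit through a fixed edge $e$'' as the outcome of an infinite two-player game played on $T$: the players alternately extend a path through $T$ from the node carrying $e$ and make compatible local circuit choices in each $M(t)$, and one side wins iff the resulting end lies in $\Psi$ and the local choices assemble into an honest precircuit. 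Since $\Psi$ is Borel, the winning condition is a Borel subset of the space of plays, so Martin's Borel determinacy theorem applies and one side has a winning strategy; from this one extracts either an extension of $I^*$ or a certificate that $I^*$ is maximal. Arranging this family of games so that strategies correspond precisely to matroid-theoretic objects, and so that the Borel complexity of $\Psi$ really does yield a Borel winning condition, is the crux of the proof, and is also where the Borel hypothesis on $\Psi$ is indispensable (without it, tree analogues of \autoref{eg:quircuits} furnish counterexamples).

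Once (C1)--(C3) and (CM) are established, we have a matroid $M$ whose circuits are $\Ccal(\Tcal, \Psi)$. I would then run the same argument on $(\Tcal^*, \Psi\ct)$ to obtain a second matroid $M'$ whose circuits are the $\Psi\ct$-cocircuits of $\Tcal$. \autoref{2SumsO1} gives $|C \cap D| \neq 1$ whenever $C \in \Ccal(M)$ and $D \in \Ccal(M')$, so \autoref{cir_c_scrawl} applied with $\Ccal = \Ccal(M)$ and $\Dcal = \Ccal(M')$ forces $\Ccal(M') = \Ccal(M^*)$. Hence the cocircuits of $M$ are exactly the $\Psi\ct$-cocircuits of $\Tcal$, and $M = M_{\Psi}(\Tcal)$ is the desired $\Psi$-matroid.
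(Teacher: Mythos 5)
The theorem you're proving is cited from~\cite{BC:psi_matroids}, so there is no proof in the present paper to compare against directly; your general shape — verify (C1)--(C3), (CM) for the $\Psi$-circuits, invoke Borel determinacy for (CM), then match the cocircuits — is indeed the spirit of the cited proof. However, your final step contains a genuine circularity. To apply \autoref{cir_c_scrawl} with $\Ccal = \Ccal(M)$ and $\Dcal = \Ccal(M')$, you must first verify its hypothesis that \emph{every $M$-cocircuit is a union of elements of $\Dcal$}, i.e.\ of $\Psi\ct$-cocircuits — but that is precisely the conclusion you are trying to reach. \autoref{2SumsO1} together with \autoref{is_scrawl} only yields the reverse inclusions: it tells you that every $\Psi\ct$-cocircuit is a coscrawl of $M$ and every $\Psi$-circuit is a coscrawl of $M'$, which does not pin down $\Ccal(M^*)$. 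The orthogonality $|C\cap D|\neq 1$ alone cannot determine duality: for instance, if $M$ and $M'$ are both the free matroid on a nonempty set then the condition holds vacuously, yet $\Ccal(M')=\emptyset\neq\Ccal(M^*)$. What is missing is a separate, direct combinatorial argument that any subset of $E$ never meeting a $\Psi$-circuit in exactly one edge is a union of $\Psi\ct$-cocircuits (an ``orthogonality theorem'' for trees of matroids); this must be proved at the level of precircuits, not deduced from the matroid axioms of $M$ and $M'$.

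A smaller but still real issue is your treatment of (C3). Running circuit elimination independently in each $M(t)$ and hoping the dummy-edge choices line up is not mere bookkeeping: the local eliminated circuits at adjacent nodes $t$ and $t'$ may disagree on whether $e(tt')$ is used, and the resulting supports need not form a connected subtree through $z$. One needs a genuine inductive construction radiating outward from the node carrying $z$, choosing the local circuit at each new node so as to respect the dummy edges already committed to and taking care that the process terminates in a legitimate $\Psi$-precircuit. Your sketch of (CM) via a Borel game is the right idea and is where the Borel hypothesis enters, but as written it does not say how a winning strategy produces the required maximal independent superset; that translation is the technical heart of the argument and would need to be spelled out.
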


\begin{rem}
In particular, we always get a $\Psi$-matroid when $\Psi$ is the empty set or the set $\Omega(T)$ of all ends of $T$. It is clear that if each matroid $M(t)$ is finitary, then so is $M_\emptyset(\Tcal)$.
\end{rem}

So far we have discussed how to construct matroids by gluing together trees of `smaller' matroids. Now we turn to a notion, taken from~\cite{ADP:decomposition}, of a decomposition of a matroid into a tree of such smaller parts.

\begin{dfn}
A {\em tree decomposition of adhesion 2} of a matroid $N$ consists of a tree $T$ and a partition $R = (R_v)_{v \in V(T)}$ of the ground set $E$ of $N$ such that for any edge $tt'$ of $T$ the partition $(\bigcup_{v \in V(T_{t \to t'})} R_v, \bigcup_{v \in V(T_{t' \to t})} R_v)$ is a 2-separation of $N$.

Given such a tree decomposition, and a vertex $v$ of $T$, we define a matroid $M(v)$, called the {\em torso} of $T$ at $v$, as follows: the ground set of $M(v)$ consists of $R_v$ together with a new edge $e(vv')$ for each edge $vv'$ of $T$ incident with $v$. For any circuit $o$ of $N$ not included in any set $\bigcup_{t \in V(T_{v \to v'})} R_v$, we have a circuit $\hat o(v)$ of $M(v)$ given by $(o \cap R_v )\cup \{e(vv') \in E(v) | o \cap \bigcup_{t \in V(T_{v \to v'})} R_v\neq \emptyset\}$. These are all the circuits of $M(v)$.

In this way we get a tree of matroids $\Tcal(N, T, R) = (T, v \mapsto M(v))$ of overlap 1 from any tree decomposition of adhesion 2. For any circuit $o$ of $N$ we get a corresponding precircuit $(S_o, \hat o)$, where $S_o$ is just the subtree of $T$ consisting of those vertices $v$ for which $\hat o(v)$ is defined.
\end{dfn}

Note that $\underline{(S_o, \hat o)} = o$. Each $M(v)$ really is a matroid~\cite[\S4, \S8]{ADP:decomposition}, isomorphic to a minor of $N$~\cite{BCP:quircuits}, and that $\Tcal(N^*, T, R) = (\Tcal(N, T, R))^*$.
\cite{ADP:decomposition} also contains the following theorem.

\begin{thm}[Aigner-Horev, Diestel, Postle]\label{deco}
 For any matroid $N$ there is a tree decomposition $\Dcal(N)$ of adhesion 2 of $N$ such that all torsos have size at least 3 and are either circuits, cocircuits or 3-connected, and in which no two circuits and no two cocircuits are adjacent in the tree. This decomposition is unique in the sense that any other tree decomposition with these properties must be isomorphic to it.
\end{thm}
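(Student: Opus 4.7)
My plan is to build the decomposition from a maximal nested family of ``proper'' 2-separations of $N$. Two 2-separations $(A,B)$ and $(A',B')$ are \emph{nested} if at least one of the four intersections $A\cap A'$, $A\cap B'$, $B\cap A'$, $B\cap B'$ is empty. First I would establish an ``uncrossing'' lemma: from two crossing proper 2-separations one can construct two nested ones by suitably taking unions and intersections, using submodularity of the connectivity function. This is standard for finite matroids, and I would have to verify that it carries over to the infinitary setting.

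For existence, I would apply Zorn's lemma to the poset of nested families of proper 2-separations of $N$ (requiring each side to have size at least 3), ordered by inclusion, to obtain a maximal such family $\Ncal$. From $\Ncal$ I would construct a tree $T$ in the standard way: vertices are the ``bubbles'' cut out by $\Ncal$, edges correspond to members of $\Ncal$, and the partition $R$ of $E(N)$ distributes each element to the unique bubble containing it. The torso $M(v)$ is defined on $R_v$ together with a dummy edge for each incident tree edge, with circuits inherited from $N$. If some torso admitted a proper 2-separation, it would lift to one of $N$ strictly refining $\Ncal$, contradicting maximality; hence every torso is 3-connected, a circuit, or a cocircuit. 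To enforce the condition that no two adjacent torsos are both circuits or both cocircuits, I would then perform a canonicalisation step: whenever this occurs, remove the separating member from $\Ncal$ and merge the two torsos into a single one of the same type, verifying that the resulting family is well-defined.

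Uniqueness would follow by showing that any other tree decomposition $\Dcal'$ with the stated properties yields, via its internal 2-separations, a nested family of proper 2-separations agreeing with the canonicalised $\Ncal$. Since each torso of $\Dcal'$ admits no further proper 2-separation (being 3-connected, a circuit, or a cocircuit, with the adjacency restriction preventing any merge), and conversely every 2-separation in $\Ncal$ corresponds to an internal edge of $\Dcal'$, the two trees coincide up to isomorphism.

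The main obstacle, compared with the finite case, is infinitary. Zorn's lemma produces a maximal family, but I would need to argue carefully that the associated bubbles form a genuine graph-theoretic tree (rather than some more general tree-like or order-theoretic space, in the sense alluded to in the excerpt via \cite{nikiel}), that each element of $E(N)$ actually lies in some bubble rather than falling ``between'' an infinite descending chain of separations, and that the resulting torsos are genuine matroids satisfying axioms (C3) and (CM). These infinitary checks are the substance of the arguments in \cite{ADP:decomposition}, and are the place where naive transcription of the Cunningham--Edmonds proof breaks down.
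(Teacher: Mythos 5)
This theorem is not proved in the paper at all: it is quoted as a black box from Aigner-Horev, Diestel and Postle \cite{ADP:decomposition}, and the authors explicitly rely on that reference for both existence and uniqueness. So there is no ``paper's own proof'' to compare against; what you have written is a sketch of how one might prove the external result, and it should be judged on that footing.

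As a sketch, your plan follows the Cunningham--Edmonds template, which is indeed the right family of ideas, but there are two genuine gaps beyond the infinitary bookkeeping you flag at the end. First, the route ``Zorn's lemma $\to$ maximal nested family $\Ncal$ $\to$ merge adjacent same-type torsos'' does not by itself give canonicity. Maximal nested families of proper 2-separations are far from unique (already inside a single large circuit there are many maximal nested families of arc-separations, all chopping it into triangles in different ways), and you would have to prove that after the merging step all choices lead to isomorphic decompositions. The standard way to sidestep this is to work from the outset with the \emph{totally nested} 2-separations, i.e.\ those nested with every other proper 2-separation; this family is canonically determined and needs no Zorn and no post-hoc merging, and it is essentially what \cite{ADP:decomposition} does. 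With your ordering of the steps, uniqueness is asserted but not actually argued. Second, the uncrossing lemma cannot simply be ``carried over'': the usual submodularity argument uses the connectivity function $\lambda(X)=r(X)+r(E\setminus X)-r(E)$, whose terms are typically infinite for infinite matroids, so the inequality is meaningless as written. One needs either a finitely-valued connectivity function defined directly for low-order separations, or a purely combinatorial characterisation of 2-separations via circuits and cocircuits, before uncrossing can even be stated. Finally, as you concede, the hard infinitary content --- that the bubbles form a genuine graph-theoretic tree, that no element of $E(N)$ is lost at a limit, and that the torsos satisfy (C3) and (CM) --- is deferred wholesale to \cite{ADP:decomposition}; for a self-contained proof those would be the main theorems, not footnotes.
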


The above theorem is a generalisation to infinite matroids of a standard result about finite matroids~\cite{findec1, findec2}. If $N$ is a finite matroid, it is possible to reconstruct $N$ from the decomposition $\Dcal(N)$. However, as noted in the introduction, it is not in general possible to reconstruct $N$ from $\Dcal(N)$ if $N$ is infinite. Our aim in the next section will be to show that if $N$ is tame, then not much extra information is needed to recover $N$. All we need is the set $\Psi$ consisting of those ends of $T$ that appear in the closure of some circuit of $N$.

\section{Reconstruction}\label{rec}

Let $N$ be a tame matroid and let $(T, R)$ be a tree decomposition of $N$ of adhesion 2. We begin by considering the case that $T$ is a ray $t_1, t_2, \ldots$. In this case, we can show that the tree $\Tcal(N, T, R)$ is well behaved.

\begin{dfn}
A precircuit $(S, o)$ for a tree $\Tcal = (T, M)$ of matroids of overlap 1 is called a {\em phantom precircuit} if there is an edge $tt'$ of $S$ such that $o(v) \cap E(\Tcal) = \emptyset$ for $v \in V(S_{t \to t'})$.

$\Tcal=(T,M)$ is \emph{nice} if neither $\Tcal$ nor $\Tcal^*$ has any phantom precircuits.
\end{dfn}

Note that $\Tcal=(T,M)$ is nice iff there is not $tt'\in E(T)$ such that
in $\Tcal_{t \to t'} = (T_{t \to t'}, M \restric_{V(T_{t \to t'})})$ the edge $e(tt')$ is either a loop in $M_{\Omega(\Tcal_{t \to t'})}(\Tcal_{t \to t'})$ 
or a coloop $M_{\emptyset}(\Tcal_{t \to t'})$.

\begin{lem}\label{ray+td_nice}
 Let $N$ be a matroid with a tree decomposition $(T,R)$ of adhesion 2.
\begin{enumerate}
 \item For every $N$-circuit $o$ its corresponding precircuit $(S_o,\hat o)$ is not phantom.
\item If $T$ is a ray, and there is a circuit $o$ and a cocircuit $b$ of $N$ that both have edges in infinitely many of the $R_v$, then $\Tcal(N,T,R)$ is nice.
\end{enumerate}
\end{lem}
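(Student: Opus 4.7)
For part (1), I would argue by contradiction using the fact that the $R_v$ partition $E(\Tcal)$. Suppose $(S_o, \hat o)$ is phantom at a directed edge $tt' \in E(S_o)$, so that $\hat o(v) \cap E(\Tcal) = \emptyset$ for all $v \in V((S_o)_{t \to t'})$. Since dummy edges lie outside $E(\Tcal)$, this reads $o \cap R_v = \emptyset$ on that side. For any $v \in V(T) \setminus V(S_o)$, the fact that $\hat o(v)$ is undefined means $o$ lies in a single branch away from $v$, which again forces $o \cap R_v = \emptyset$. Combining, $o$ misses every $R_v$ with $v \in V(T_{t \to t'})$, so by the defining formula $e(tt') \notin \hat o(t)$; but $t' \in V(S_o)$ together with the precircuit axiom forces $e(tt') \in \hat o(t)$, a contradiction.

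For part (2), I would show separately that neither $\Tcal(N,T,R)$ nor its dual has a phantom precircuit. Since every connected subtree of the ray $T = t_1, t_2, \ldots$ is a subpath, any candidate phantom precircuit $(S,\tilde o)$ with phantom edge $t_i t_{i+1}$ has phantom side $\{t_{i+1}, t_{i+2}, \ldots\} \cap S$, which is either finite or infinite; I handle these cases separately.

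In the finite case the phantom side ends at some $t_b$, and the precircuit axiom combined with the phantom condition force $\tilde o(t_b)$ to be a singleton dummy edge (and symmetrically at a left boundary of $S$), i.e.\ a loop of $M(t_b)$ at a dummy edge. Such a loop would, by the torso description, arise as $\hat o'(t_b)$ for some $N$-circuit $o'$ contained in a single branch away from $t_b$; but that is precisely the situation in which $\hat o'(t_b)$ is undefined, a contradiction. The same argument, read in the dual, rules out the finite case for $\Tcal^*$, and uses no hypothesis on $N$.

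In the infinite case the precircuit axiom forces $\tilde o(t_j) = \{e(t_{j-1}t_j), e(t_jt_{j+1})\}$ for every $j \geq i+1$, so each such pair is a circuit of $M(t_j)$. Here I use the hypothesized $N$-circuit $o$: since $o$ meets infinitely many $R_v$, for all sufficiently large $j$ the precircuit $(S_o,\hat o)$ is defined at $t_j$ with $\hat o(t_j) = (o \cap R_{t_j}) \cup \{e(t_{j-1}t_j), e(t_jt_{j+1})\}$. Incomparability of circuits (axiom C2) then forces $o \cap R_{t_j} = \emptyset$ for all such $j$, contradicting that $o$ has edges in infinitely many $R_v$. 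Running the dual argument with the cocircuit $b$ disposes of the infinite case in $\Tcal^*$. The main obstacle is really just the bookkeeping: unpacking the precircuit axiom to identify the precise shape of $\tilde o$ along the phantom side (singleton or two-element dummy set), after which the loop-exclusion and minimality steps are each one line.
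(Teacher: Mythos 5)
Your proof is correct and takes essentially the same approach as the paper's. For part (1) you unpack what the paper dismisses as ``follows from the definition of $S_o$''; for part (2), in the main case (phantom side containing a tail of the ray) you show $\tilde o(t_j)=\{e(t_{j-1}t_j),e(t_jt_{j+1})\}$ for large $j$ and compare with $\hat o(t_j)$ via (C2), exactly as the paper does (the paper phrases the final contradiction as contradicting part (1), while you contradict the hypothesis on $o$ directly, but these amount to the same thing). You also explicitly rule out a finite phantom side by observing it would force a dummy edge to be a loop of some torso, which the torso description makes impossible; the paper's phrase ``for all sufficiently large $n$'' silently assumes the phantom side is infinite, so this is a small but genuine tidying-up of the exposition.
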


\begin{proof}
(1) follows from the definition of $S_o$.

For (2), let $T=t_1,t_2,\ldots$ be a ray. 
Now suppose for a contradiction that 
there is a phantom precircuit $(S_c,c)$. Then for all sufficiently large $n$,
the circuit $c(t_n)$ consists of $e(t_{n-1}t_n)$ and $e(t_{n}t_{n+1})$.
In other words, $e(t_{n-1}t_n)$ and $e(t_{n}t_{n+1})$ are in parallel.

So $c(t_n)\se \hat o(t_n)$, hence $c(t_n)= \hat o(t_n)$.
This contradicts (1).
The case that there is a phantom precocircuit $(S_c,c)$ is similar.
Hence $\Tcal(N,T,R)$ is nice.
\end{proof}

\begin{lem}\label{cir_in_omega_cir}
Let $\Tcal=(T,M)$ be a nice tree of matroids, then every $\emptyset$-circuit is
an $\Omega(T)$-circuit.
\qed
\end{lem}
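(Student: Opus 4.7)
The plan is to show that if $o$ is an $\emptyset$-circuit, witnessed by an $\emptyset$-precircuit $(C_0, o_0)$ with $C_0$ rayless, then $o$ is minimal among underlying sets of $\Omega(T)$-precircuits. Since $\emptyset \subseteq \Omega(T)$, every $\emptyset$-precircuit is automatically an $\Omega(T)$-precircuit, so $o$ is itself an $\Omega(T)$-precircuit underlying set. I would argue minimality by contradiction: assume $(C_1, o_1)$ is an $\Omega(T)$-precircuit with $\emptyset \neq \underline{(C_1, o_1)} \subsetneq o$. If $C_1$ happens to be rayless, then $(C_1, o_1)$ is an $\emptyset$-precircuit, immediately contradicting the $\emptyset$-minimality of $o$. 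So from here on, I may assume $C_1$ contains a ray, and the goal is to derive a phantom precircuit and thus contradict niceness.

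The key observation is the following disjointness: the sets $E \cap E(v)$, as $v$ ranges over $V(T)$, are pairwise disjoint, because any edge in $E(v) \cap E(v')$ with $v \neq v'$ is by definition a dummy edge and therefore excluded from $E(\Tcal)$. Writing $o = \bigcup_{t \in V(C_0)}\bigl(o_0(t) \cap E\bigr)$, this forces $o \cap E(v) = \emptyset$ for every $v \notin V(C_0)$. Since $o_1(v) \cap E \subseteq \underline{(C_1, o_1)} \cap E(v) \subseteq o \cap E(v)$, the precircuit $(C_1, o_1)$ can contribute no real edges at any vertex outside $V(C_0)$.

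What remains is to locate an edge $tt' \in E(C_1)$ such that every vertex of $(C_1)_{t \to t'}$ lies outside $V(C_0)$; such an edge witnesses $(C_1, o_1)$ as phantom by the disjointness observation. Let $K = V(C_0) \cap V(C_1)$. By the Helly property for subtrees of a tree, $K$ is a subtree of $C_1$; it is nonempty (a nonempty underlying set forces some real edge to live in $E \cap E(v) \cap E(v')$-style overlap with $C_0$, placing $v$ in $V(C_0)$) and rayless (being a subtree of $C_0$). Take a ray $\rho$ in $C_1$; its intersection with $K$ is a consecutive segment of $\rho$, hence finite. Let $t$ be the last vertex of $\rho$ in $K$ and $t'$ its successor on $\rho$. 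Then $t' \notin K$, and for any $u \in V((C_1)_{t \to t'}) \cap K$ the unique $C_1$-paths from $t$ to $u$ through $t'$ and through $K$ would give two distinct $t$-to-$u$ paths in the tree $C_1$, which is impossible. Hence $V((C_1)_{t \to t'}) \cap V(C_0) = \emptyset$, making $(C_1, o_1)$ a phantom precircuit and contradicting niceness.

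The main obstacle is the tree-theoretic bookkeeping in the last paragraph: once one has the disjointness observation, the task of actually producing a phantom-witnessing edge has to carefully handle the possibility that the ray re-enters $V(C_0)$, and that the branch $(C_1)_{t \to t'}$ could in principle swing back to pick up a stray vertex of $V(C_0)$. Both issues are resolved by the Helly property for subtree intersections plus uniqueness of paths in the tree $C_1$. After that, invoking niceness is routine.
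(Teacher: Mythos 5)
Your argument is correct in substance, and since the paper gives this lemma without proof (only a $\qed$), this is exactly the kind of filled-in argument the authors had in mind: reduce to showing minimality among $\Omega(T)$-precircuit underlying sets, use disjointness of the sets $E(\Tcal)\cap E(v)$ to show any smaller precircuit contributes nothing outside $V(C_0)$, and then exhibit a phantom edge. The tree-theoretic bookkeeping (intersection of subtrees is a subtree, the ray meets $K$ in a finite segment, uniqueness of paths) is all sound.

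One small gap to patch: you ``take a ray $\rho$ in $C_1$'' and then refer to ``the last vertex of $\rho$ in $K$,'' which presupposes $\rho\cap K\neq\emptyset$. An arbitrary ray in $C_1$ need not meet $K$. Since $K$ is nonempty and $C_1$ is a tree containing a ray, $C_1$ contains a ray starting from any prescribed vertex (concatenate the path from that vertex to the ray with a tail of the ray); choose $\rho$ to start at a vertex of $K$, and the rest of your argument goes through unchanged. A cosmetic note: what you use is the standard fact that the intersection of two connected subgraphs of a tree is connected, which is adjacent to but not literally the Helly property.
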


By duality, an analogue of \autoref{cir_in_omega_cir} is also true for cocircuits.

\begin{lem}\label{cir_in_cir}
Let $\Tcal=(T,M)$ be a nice tree of matroids, and $N$ be a matroid such that 
$\Ccal(N)\se \Ccal(M_{\Omega(T)}(\Tcal))$ and  $\Ccal(N^*)\se \Ccal(M^*_{\emptyset}(\Tcal))$.

Then $\Ccal(M_{\emptyset}(\Tcal))\se \Ccal(N)$ and  $\Ccal(M^*_{\Omega(T)}(\Tcal))\se \Ccal(N^*)$.
\end{lem}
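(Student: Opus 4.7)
My plan is to prove $\Ccal(M_{\emptyset}(\Tcal))\se\Ccal(N)$ directly, and then to deduce $\Ccal(M^*_{\Omega(T)}(\Tcal))\se\Ccal(N^*)$ by running the same argument on the pair $(\Tcal^*,N^*)$. The dual of a nice tree of matroids is nice, and the two hypotheses swap roles under the involution $\Tcal\leftrightarrow\Tcal^*$, $N\leftrightarrow N^*$ via the identifications $M^*_{\emptyset}(\Tcal)=M_{\Omega(T)}(\Tcal^*)$ and $M_{\Omega(T)}(\Tcal)=M^*_{\emptyset}(\Tcal^*)$. So it is enough to show that an arbitrary $\emptyset$-circuit $o$ of $\Tcal$ is a circuit of $N$.

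The first subgoal is that $o$ is an $N$-scrawl. Pick an $\emptyset$-precircuit with underlying set $o$. Taking $\Psi=\emptyset$ in \autoref{2SumsO1}, the underlying set of any $\emptyset$-precircuit of $\Tcal$ meets the underlying set of any $\Omega(T)$-precircuit of $\Tcal^*$ in a set of size $\neq 1$; in particular $|o\cap b'|\neq 1$ for every $\Omega(T)$-cocircuit $b'$ of $\Tcal$. These $\Omega(T)$-cocircuits are, by construction, the circuits of $M^*_{\emptyset}(\Tcal)$, so the hypothesis $\Ccal(N^*)\se\Ccal(M^*_{\emptyset}(\Tcal))$ says that every $N$-cocircuit is an $\Omega(T)$-cocircuit of $\Tcal$. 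Thus $o$ meets no $N$-cocircuit in exactly one edge, and \autoref{is_scrawl} gives that $o$ is an $N$-scrawl.

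For the second subgoal I would upgrade scrawl to circuit. Since $o$ is nonempty, it contains some $N$-circuit $o'$. By $\Ccal(N)\se\Ccal(M_{\Omega(T)}(\Tcal))$, $o'$ is an $\Omega(T)$-circuit of $\Tcal$. On the other hand, \autoref{cir_in_omega_cir} (which is where niceness enters) tells us that $o$ itself is also an $\Omega(T)$-circuit of $\Tcal$. Both $o'$ and $o$ are therefore circuits of the matroid $M_{\Omega(T)}(\Tcal)$ with $o'\se o$, so circuit incomparability (C2) forces $o=o'\in\Ccal(N)$.

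The main potential obstacle is really just bookkeeping: one must keep straight that cocircuits of $M_{\emptyset}(\Tcal)$ are the $\Omega(T)$-cocircuits of $\Tcal$ (not the $\emptyset$-cocircuits), and that \autoref{2SumsO1} pairs an $\emptyset$-precircuit of $\Tcal$ with an $\Omega(T)$-precircuit of $\Tcal^*$ with the complements landing on the correct side. Once that dictionary is in place the proof is a two-sided pinch: the scrawl criterion traps an $N$-circuit inside $o$ from below, while incomparability of $\Omega(T)$-circuits forces that circuit to fill all of $o$ from above.
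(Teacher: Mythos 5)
Your proposal is correct and follows essentially the same route as the paper's proof: reduce to one inclusion by duality, show that an $\emptyset$-circuit $o$ of $\Tcal$ is an $N$-scrawl via \autoref{is_scrawl} (using that $N$-cocircuits are $\Omega(T)$-cocircuits of $\Tcal$), extract an $N$-circuit $o'\subseteq o$, observe that $o'$ and $o$ are both $\Omega(T)$-circuits (the latter by \autoref{cir_in_omega_cir}, which is where niceness is used), and conclude $o=o'$ by circuit incomparability. The only cosmetic difference is that you justify ``$o$ never meets a cocircuit of $M_{\emptyset}(\Tcal)$ just once'' by invoking \autoref{2SumsO1} directly, where the paper implicitly uses the fact that no circuit and cocircuit of the matroid $M_{\emptyset}(\Tcal)$ can meet in exactly one element; either justification works.
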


\begin{proof}
 By duality, it suffices to prove only that $\Ccal(M_\emptyset(T))\se \Ccal(N)$.
So let $o\in \Ccal(M_\emptyset(T))$. 
Since $o$ never meets an element of $\Ccal(M^*_{\emptyset}(T))$ just once,
it never meets an element of $\Ccal(N^*)$ just once.
Hence $o$ includes an $N$-circuit $o'$ by \autoref{is_scrawl}.
Thus $o'\in  \Ccal(M_{\Omega(T)}(T))$. By \autoref{cir_in_omega_cir},
we must have $o'=o$. So $o\in \Ccal(N)$, as desired.
\end{proof}

\begin{lem}\label{ray_case}
  Let $N$ be a tame matroid with a tree decomposition $(T,R)$ of adhesion 2.
Assume that $T=t_1t_2\ldots$ is a ray.

Then there are not a circuit $o$ and a cocircuit $b$ of $N$ that both converge to the end $\omega$ of $T$.

Indeed, either $N=M_\emptyset(\Tcal(N,T,R))$ or $N=M_{\{\omega\}}(\Tcal(N,T,R))$

\end{lem}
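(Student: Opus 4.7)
The plan is to prove the non-existence part first, by contradiction, and then deduce the dichotomy from it.

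For the non-existence, assume both a circuit $o$ and a cocircuit $b$ of $N$ converge to $\omega$. By tameness $|o\cap b|$ is finite, so for large enough $n_0$ we have $o\cap b\se A_{n_0}:=\bigcup_{i\le n_0}R_{t_i}$ while both $o$ and $b$ still meet $B_{n_0}:=E\sm A_{n_0}$ (as both converge to $\omega$). The pair $(A_{n_0},B_{n_0})$ is a 2-separation of $N$ with unique connector edge $e:=e(t_{n_0}t_{n_0+1})$. I would then invoke the 2-sum decomposition at this separation to obtain a matroid $M_R$ on $B_{n_0}\cup\{e\}$ in which $o_R:=(o\cap B_{n_0})\cup\{e\}$ is a circuit and $b_R:=(b\cap B_{n_0})\cup\{e\}$ is a cocircuit. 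By the choice of $n_0$, $o_R\cap b_R=\{e\}$, contradicting the basic matroid fact that no circuit and cocircuit can meet in a single element. The main technical obstacle is rigorously justifying this 2-sum decomposition in the infinite setting; morally one realizes $M_R$ as arising from the tree of matroids on the tail of the ray, together with $e$ as a distinguished edge in the torso at $t_{n_0+1}$, and one verifies that $o_R$ and $b_R$ are genuinely a circuit and cocircuit respectively using the torso framework from \cite{ADP:decomposition}.

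Given non-existence, either (a) no $N$-circuit converges to $\omega$, or (b) no $N$-cocircuit does. By the duality $\Tcal(N^*,T,R)=\Tcal(N,T,R)^*$, these cases are symmetric, so it suffices to handle case (a) and deduce $N=M_\emptyset(\Tcal)$. In case (a), every $N$-circuit $o$ has $S_o$ finite, so $o=\underline{(S_o,\hat o)}$ is the underlying set of a (finite) $\emptyset$-precircuit of $\Tcal$. Applying \autoref{2SumsO1} with $\Psi=\emptyset$ — and noting that every precocircuit of $\Tcal^*$ is automatically a $\Psi\ct=\Omega$-precircuit since $\Omega(T)=\{\omega\}$ — we see that the underlying set of any $\emptyset$-precircuit of $\Tcal$ meets every $N$-cocircuit in $\neq 1$ edges. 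By \autoref{is_scrawl}, every such nonempty underlying set is then an $N$-scrawl, hence contains an $N$-circuit.

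This lets me conclude $\Ccal(N)=\Ccal(M_\emptyset(\Tcal))$. For the forward inclusion, fix an $N$-circuit $o$; it is an $\emptyset$-precircuit underlying set, and if a nonempty $c\subsetneq o$ were also such an underlying set, $c$ would be an $N$-scrawl and hence contain an $N$-circuit properly inside $o$, contradicting minimality of $o$ in $N$. So $o$ is minimal among nonempty $\emptyset$-precircuit underlying sets, i.e.\ $o\in\Ccal(M_\emptyset(\Tcal))$. For the reverse, any $c\in\Ccal(M_\emptyset(\Tcal))$ is an $N$-scrawl by the same orthogonality, so contains an $N$-circuit $o''\se c$; by the forward direction $o''\in\Ccal(M_\emptyset(\Tcal))$, and circuit minimality (C2) forces $o''=c$. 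Hence $N=M_\emptyset(\Tcal)$, and dually in case (b) we obtain $N=M_{\{\omega\}}(\Tcal)$.
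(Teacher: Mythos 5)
Your proof is correct, and for the non-existence part it takes a genuinely different (and shorter) route than the paper. Where the paper stays inside the tree-of-matroids machinery — it shows $b\in\Ccal(M_\emptyset(\Tcal)^*)$, finds $o_b\in\Ccal(M_\emptyset(\Tcal))$ meeting $b$ in exactly two edges via \autoref{o_cap_b}, splices $\hat o$ and $\hat o_b$ into a new $\emptyset$-precircuit $C$ (invoking \autoref{cir_in_cir} and niceness via \autoref{ray+td_nice}), and then applies circuit elimination to $o$ and $C$ to produce $C'$ with $C'\cap b=\{e_n\}$ — you instead coarsen the ray decomposition to the single $2$-separation $(A_{n_0},B_{n_0})$, apply the ADP torso theorem to get a matroid on $B_{n_0}+e$ in which $o_R$ and $b_R$ are a circuit and cocircuit meeting exactly in $\{e\}$. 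This is conceptually crisper and entirely justified by the paper's own citation of \cite{ADP:decomposition} (the torso is even noted to be a minor of $N$). The trade-off is that your argument leans on the 2-sum/torso theory being available, which requires an honest tree decomposition of $N$; the paper's circuit-elimination argument is what generalises to \autoref{nice_tree}, where $N$ is merely a tame $\Tcal$-matroid and no tree decomposition of $N$ itself is given. A small point to make explicit: you need $o$ and $b$ to each meet $A_{n_0}$ as well as $B_{n_0}$, so that $e\in o_R$ and $e\in b_R$; since both are infinite (they converge to $\omega$) this is achieved by taking $n_0$ large enough, but it should be stated alongside $o\cap b\se A_{n_0}$.

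For the dichotomy you follow essentially the same line as the paper (orthogonality via \autoref{2SumsO1}, then \autoref{is_scrawl}/\autoref{cir_c_scrawl}), just unrolled by hand. One genuine gap: you conclude $N=M_\emptyset(\Tcal)$ from $\Ccal(N)=\Ccal(M_\emptyset(\Tcal))$ alone, but the object $M_\emptyset(\Tcal)$ is only defined to be the matroid whose circuits are the $\emptyset$-circuits \emph{and} whose cocircuits are the $\{\omega\}$-cocircuits of $\Tcal$. Either you should also run the dual argument to establish $\Ccal(N^*)=\Ccal(M_\emptyset(\Tcal)^*)$ (this is symmetric and short), or you should first invoke \autoref{really_matroids} to know that $M_\emptyset(\Tcal)$ exists as a matroid, after which the equality of circuit sets suffices to identify $N$ with it. The paper sidesteps this by applying \autoref{cir_c_scrawl}, which delivers both the circuit and cocircuit containments in one stroke.
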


\begin{proof}
 Suppose for a contradiction that there are such $o$ and $b$. 
Then there are $l<m<n$ and $e_l,e_m,e_n\in E(N)$ such that 
$e_l\in b\cap E(t_{l})$, and $e_m\in o\cap E(t_{m})$, and
$e_n\in b\cap E(t_{n})$. 
Using the tameness of $N$, we make these choices in such a way that for any $i \geq m$, the intersection of $o \cap b$ with $E(t_i)$ is empty.
We may also assume that $o$ has an edge in some $E(t_k)$ with $k<m$, so that both dummy edges of $E(t_m)$ are in $\hat o(t_m)$.

Now $b$ is a cocircuit of $M(\Tcal(N,T,R),\emptyset)$ since $(S_b,\hat b)$ is a precocircuit,
and there cannot be a precocircuit 
whose cocircuit at any node $t$ is a subset of $\hat b(t)$. 
By the dual of \autoref{o_cap_b} there is some $M_{\emptyset}(\Tcal)$-circuit $o_b$ meeting $b$ only in $e_l$ and $e_n$. 
Note that $o_b$ is also a circuit of 
$N$ by \autoref{cir_in_cir} since $\Tcal(N,T,R)$ is nice by \autoref{ray+td_nice}.

Now we build an $\emptyset$-precircuit $(S_C, \hat C)$ as follows. First we set 
$S_C =(S_{o_b}\sm \{t_1\ldots t_m\}) \cup (S_{o}\cap \{t_1\ldots t_m\})$. We take 
$\hat C(t_j)=\hat o_b(t_j)$ for $j> m$, and $\hat C(t_j)=\hat o(t_j)$ for $j\leq m$.
Let $C$ be the underlying circuit of $(S_C, \hat C)$. Note that $C$ is a circuit of 
$M_{\emptyset}(\Tcal)$ and so also a circuit of $N$ by \autoref{cir_in_cir} and \autoref{ray+td_nice} as before.

We now apply circuit elimination in $N$ to the circuits $o$ and $C$, eliminating the edge $e_m$ and keeping the edge $e_n$. Call the resulting circuit $C'$.

If $t_m\in S_{C'}$, then $\hat C'(t_m)\subseteq \hat o(t_m)-e_m$ (since both dummy edges of $E(t_m)$ are in $\hat o(t_m)$), which is impossible.
So $S_{C'}\se \{t_{m+1},t_{m+2},\ldots \}$. Hence $C'\cap b=\{e_n\}$, which is also impossible.

We have now established that there cannot be a circuit $o$ and a cocircuit $b$ of $N$ such that $\omega$ is in the closure of both $o$ and $b$. 

If $\omega$ is in the closure of some $N$-circuit, then every $N$-circuit is a $\{\omega\}$-circuit, and every $N$-cocircuit is an $\emptyset$-cocircuit. Since by \autoref{2SumsO1} no $\Psi$-circuit ever meets a $\Psi\ct$-cocircuit just once, we may apply \autoref{cir_c_scrawl} to deduce that $N = M_{\{\omega\}}(\Tcal)$.
In the case that $\omega$ is not in the closure of any $N$-circuit a similar argument yields that
$N = M_{\emptyset}(\Tcal)$.
This completes the proof.
\end{proof}

Having considered the case that the tree $T$ is a ray, we now reduce the general case to this special case.
Let $N$ be a matroid with a tree decomposition $(T,R)$ of adhesion 2.
Let $Q=q_1,q_2,\ldots $ be a ray in $T$. We define $R^Q$ to be the following coarsening 
of $R$. We define $R^Q_{q_i}$ to be the union of all the $R_{v}$ such that in $T$ the vertices
$v$ and $q_i$ can be joined by a path that does not contain any other $q_j$.

Then $(Q,R^Q)$ is a tree decomposition of $N$ of adhesion 2.
An $N$-circuit $o$ has the end $\omega$ of $Q$ in its closure with respect to $(T,R)$ if and only if $o$ has $\omega$ in its closure with respect to $(Q,R^Q)$.
So by \autoref{ray_case}, we deduce that there cannot be a circuit and a cocircuit of 
$N$ that have a common end in both of their closures (with respect to $(T,R)$).

Let $\Psi$ be the set of ends of $T$ that appear in the closure of some circuit of $N$. Thus every $N$-circuit is a $\Psi$-circuit and every $N$-cocircuit is a $\Psi\ct$-cocircuit. Since by \autoref{2SumsO1} no $\Psi$-circuit ever meets a $\Psi\ct$-cocircuit just once, we may apply \autoref{cir_c_scrawl} to deduce that $N = M_{\Psi}(\Tcal)$. Hence we get the following theorem.

\begin{thm}
Let $N$ be a tame matroid with a tree decomposition $(T,R)$ of adhesion 2.

Then there is some $\Psi\se \Omega(T)$ such that
$N=M(\Tcal(N,T,R),\Psi)$.
\end{thm}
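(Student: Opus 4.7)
The plan is to bootstrap from the ray case already handled by \autoref{ray_case}. The whole point of that lemma was to produce two kinds of output: a no-meeting-at-an-end statement (no $N$-circuit and $N$-cocircuit both converge to the unique end of the ray), and the identification $N = M_\Psi(\Tcal)$ for $\Psi \in \{\emptyset, \{\omega\}\}$. For a general tree $T$ only the first output will survive the reduction, and the final identification will instead come from \autoref{cir_c_scrawl} applied globally.

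First I would make precise the ray reduction. Given a ray $Q = q_1, q_2, \ldots$ in $T$, define $R^Q_{q_i}$ to be the union of those $R_v$ with $v$ joinable to $q_i$ by a $T$-path avoiding $\{q_j : j \neq i\}$. A straightforward check shows that $(Q, R^Q)$ is a tree decomposition of $N$ of adhesion $2$: the required $2$-separations are exactly those $2$-separations of $(T, R)$ at the edges $q_i q_{i+1}$. Moreover, an $N$-circuit $o$ has the end $\omega$ of $Q$ in its $(T,R)$-closure iff it has $\omega$ in its $(Q, R^Q)$-closure (both say that $o$ meets infinitely many parts cofinal with $\omega$), and similarly for cocircuits. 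Applying \autoref{ray_case} to $(Q, R^Q)$ therefore yields: no $N$-circuit $o$ and $N$-cocircuit $b$ simultaneously have $\omega$ in their $(T,R)$-closures.

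Since every end of $T$ is represented as the end of some ray, this rules out any end of $T$ being in the closure of both a circuit and a cocircuit of $N$. Now I define $\Psi \subseteq \Omega(T)$ to be the set of ends of $T$ appearing in the closure of at least one $N$-circuit; by the preceding sentence, $\Psi^{\complement}$ contains every end in the closure of some $N$-cocircuit. Consequently, every $N$-circuit is a $\Psi$-precircuit and every $N$-cocircuit is a $\Psi^{\complement}$-precocircuit of $\Tcal = \Tcal(N, T, R)$, which is to say each is the underlying set of such a precircuit, hence a union of $\Psi$-circuits (resp. $\Psi^{\complement}$-cocircuits) of $\Tcal$.

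Finally, I invoke \autoref{2SumsO1}, which guarantees $|\underline{(C,o)} \cap \underline{(D,b)}| \neq 1$ for any $\Psi$-precircuit and $\Psi^{\complement}$-precocircuit of $\Tcal$, and apply \autoref{cir_c_scrawl} with $\Ccal = \Ccal(\Tcal, \Psi)$ and $\Dcal = \Ccal(\Tcal^*, \Psi^{\complement})$. This yields $\Ccal(N) \subseteq \Ccal(\Tcal, \Psi) \subseteq \Scal(N)$ and dually for cocircuits; the equalities then force $N = M_\Psi(\Tcal)$ as required. The one technical point I expect to slow me down is the verification that the $(Q, R^Q)$-closure of an $N$-circuit agrees with its $(T,R)$-closure at ends of $Q$, which is bookkeeping about how infinitely many parts $R_v$ get bundled into a single $R^Q_{q_i}$, but this is routine once one recognises that $o$ uses an edge in $R^Q_{q_i}$ iff it uses an edge in one of the bundled $R_v$.
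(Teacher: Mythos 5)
Your proposal reproduces the paper's argument essentially verbatim: the same coarsening $(Q, R^Q)$, the same reduction to \autoref{ray_case}, the same definition of $\Psi$, and the same concluding appeal to \autoref{2SumsO1} and \autoref{cir_c_scrawl}. The one place you say "hence a union of $\Psi$-circuits" elides the same small verification that the paper elides when it writes "every $N$-circuit is a $\Psi$-circuit" (one should check minimality of the underlying set, which follows from \autoref{2SumsO1} together with \autoref{is_scrawl}), so this matches the paper's level of detail.
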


Combining this theorem with \autoref{deco} yields:

\begin{thm}
Let $N$ be a connected tame matroid.  
Then $N = M_{\Psi}(\Tcal)$ where each $M(t)$ is either a circuit, a cocircuit or else is 3-connected.
\end{thm}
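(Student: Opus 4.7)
The plan is essentially to combine the two main ingredients that have already been assembled: the existence of a canonical tree decomposition with well-behaved torsos (\autoref{deco}) and the reconstruction theorem for tame matroids from any tree decomposition of adhesion $2$ (the immediately preceding theorem).

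First I would apply \autoref{deco} to the tame matroid $N$, obtaining the canonical tree decomposition $\Dcal(N) = (T, R)$ of adhesion $2$ whose torsos $M(v)$ have size at least $3$ and are each a circuit, a cocircuit, or $3$-connected. This step is free: \autoref{deco} is stated for arbitrary matroids, so tameness is not even needed here, and the connectedness hypothesis of $N$ ensures that the resulting tree decomposition is a genuine decomposition (rather than a trivial one with disjoint pieces).

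Next I would feed this specific tree decomposition into the preceding theorem. Since $N$ is tame and $(T, R)$ is a tree decomposition of $N$ of adhesion $2$, that theorem immediately yields a set $\Psi \se \Omega(T)$ with $N = M_{\Psi}(\Tcal(N, T, R))$. Setting $\Tcal := \Tcal(N, T, R)$, each node matroid $M(t)$ of $\Tcal$ is precisely the torso at $t$ in $\Dcal(N)$, so by our choice of decomposition each $M(t)$ is a circuit, a cocircuit, or $3$-connected, which is exactly what the conclusion demands.

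There is no real obstacle: both halves of the argument are theorems already proved in the excerpt, and the only thing to verify is that the torsos of $\Tcal(N, T, R)$ appearing in the reconstruction theorem are literally the same as the pieces described in \autoref{deco} — which is immediate from the definition of $\Tcal(N, T, R)$. The only minor point worth remarking on in the writeup is that the $\Psi$ produced by the reconstruction theorem has the natural description as the set of ends of $T$ that lie in the closure of some circuit of $N$, matching the informal interpretation advertised in the introduction.
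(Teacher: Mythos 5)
Your proposal is correct and follows exactly the paper's own argument: apply \autoref{deco} to obtain the canonical tree decomposition of adhesion $2$ (connectedness being what makes this decomposition available), and then invoke the immediately preceding reconstruction theorem to produce the set $\Psi$ with $N = M_{\Psi}(\Tcal(N,T,R))$. No further comment is needed.
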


\begin{rem}
 In the proof of this theorem, it might look as if we would have some freedom in choosing the set $\Psi$, namely that we could take $\Psi$ to be any set containing all the ends to which some circuit converges and avoiding all ends to which some cocircuit converges. 
However, it can be shown that for every end in $\Psi$, there is a $\Psi$-circuit having this end in the closure.
\end{rem}

The arguments above make use of the fact that the matroid $N$ is severly constrained by the restriction that each of its circuits comes from some precircuit of the tree, and each of its cocircuits comes from some precocircuit. In investigating how restrictive constraints of this form might be in general, we are led to the following question. Suppose that we have a tree $\Tcal = (T, M)$ of matroids. We say a matroid $N$ is a $\Tcal$-matroid if every circuit of $N$ is an $\Omega(T)$-circuit and every cocircuit of $N$ is an $\emptyset$-cocircuit. How constrained is $N$? If $\Tcal$ is not nice, then $N$ can be quite unconstrained.

\begin{eg}
Here the tree $T$ is a ray and each $M(t)=M(C_4)$, arranged as in \autoref{fig:nice_needed}.
Then $M_\emptyset(\Tcal)$ is the free matroid but $M_{\Omega(T)}(\Tcal)$
consists of a single infinite circuit. So any pair of edges forms an $M_{\Omega(T)}(\Tcal)$-cocircuit which is not an $M_\emptyset(\Tcal)$-cocircuit.

 \begin{figure}
\begin{center}
 \includegraphics[width=8cm]{./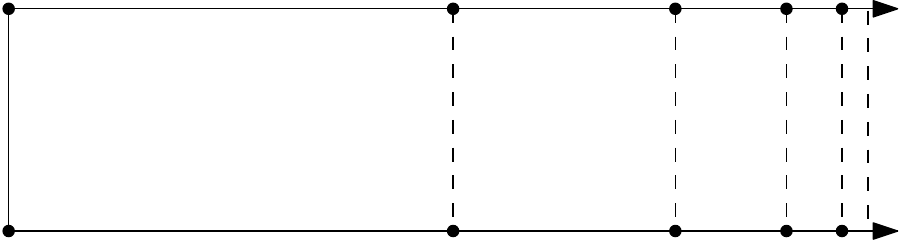}
\end{center} 
\caption{A non-nice tree of matroids}\label{fig:nice_needed}
\end{figure}
\end{eg}

However, if $\Tcal$ is nice and $N$ is tame then $N$ has to be of the form $M_{\Psi}(\Tcal)$:

\begin{thm}\label{nice_tree}
 Let $\Tcal = (T, M)$ be a nice tree of matroids of overlap 1, and let $N$ be a tame $\Tcal$-matroid. Then there is some $\Psi \se \Omega(T)$ such that $N = M_{\Psi}(\Tcal)$.
\end{thm}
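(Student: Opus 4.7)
The plan is to mimic the proof of \autoref{intrec}. Define $\Psi \se \Omega(T)$ to be the set of ends $\omega$ of $T$ that appear in the closure of some $N$-circuit; equivalently, $\Psi$ consists of the ends of $T$ that arise as ends of the supporting subtree $S_o$ of the precircuit of some $N$-circuit $o$ in $\Tcal$ (such a precircuit exists because every $N$-circuit is an $\Omega(T)$-circuit of $\Tcal$). Granted the key claim below, every $N$-circuit is a $\Psi$-circuit of $\Tcal$ and every $N$-cocircuit is a $\Psi\ct$-cocircuit of $\Tcal$. By \autoref{2SumsO1} no $\Psi$-circuit meets a $\Psi\ct$-cocircuit in a single edge, so \autoref{cir_c_scrawl}, applied with $\Ccal$ the set of $\Psi$-circuits and $\Dcal$ the set of $\Psi\ct$-cocircuits of $\Tcal$, yields $\Ccal(N) \se \Ccal(M_{\Psi}(\Tcal))$ and $\Ccal(N^*) \se \Ccal(M_{\Psi}(\Tcal)^*)$; since circuit sets of matroids are antichains, these inclusions force $N = M_{\Psi}(\Tcal)$.

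The key claim is that no $N$-circuit and $N$-cocircuit share an end in their closures. I would prove it by the same reduction-to-ray argument used after \autoref{ray_case} for tree decompositions, transported to the tree-of-matroids setting. Suppose for contradiction that $\omega$ is a common end of $S_o$ and $S_b$ for an $N$-circuit $o$ and an $N$-cocircuit $b$; tameness gives $|o\cap b|<\infty$. Choose a ray $R = r_1 r_2 \ldots$ in $T$ converging to $\omega$ which is eventually contained in $S_o \cap S_b$. Coarsen $\Tcal$ along $R$: for each $i$, let $T_i$ be the component of $T - \{r_{i-1}, r_{i+1}\}$ containing $r_i$, and form a tree of matroids $\Tcal^R = (R, M^R)$ whose torso $M^R(r_i)$ captures $\Tcal\restric_{T_i}$ together with the two external overlap edges $e(r_{i-1}r_i)$ and $e(r_ir_{i+1})$. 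One then verifies that $\Tcal^R$ is a nice tree of matroids of overlap $1$, that $N$ is a tame $\Tcal^R$-matroid, and that the supporting trees of $o$ and $b$ in $\Tcal^R$ both have $\omega$ as an end. The splicing argument of \autoref{ray_case} now goes through verbatim in $\Tcal^R$: using tameness, pick $l < m < n$ together with edges $e_l \in b \cap E(r_l)$, $e_m \in o \cap E(r_m)$, $e_n \in b \cap E(r_n)$ all disjoint from $o \cap b$; use the dual of \autoref{o_cap_b} inside $M_{\emptyset}(\Tcal^R)$ together with \autoref{cir_in_cir} to obtain an $N$-circuit $o_b$ with $o_b \cap b = \{e_l, e_n\}$; splice the precircuits at $r_m$ to form an $\emptyset$-precircuit of $\Tcal^R$ whose underlying set $C$ is an $N$-circuit (again by \autoref{cir_in_cir}); and eliminate $e_m$ between $o$ and $C$ in $N$ to obtain an $N$-circuit $C'$ with $C' \cap b = \{e_n\}$, contradicting \autoref{never_meet_just_once}.

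The hard part is justifying the coarsening. Setting $M^R(r_i) := M_{\Omega(T_i)}(\Tcal \restric_{T_i})$ is the natural choice, but its existence is not immediate from \autoref{really_matroids} because $\Omega(T_i) \se \Omega(T)$ need not be Borel. A cleaner route is to define $M^R(r_i)$ directly as the torso of $N$ arising from the $2$-separation of $N$ cut out by the pair of overlap edges $e(r_{i-1}r_i)$ and $e(r_ir_{i+1})$, which is automatically a matroid; then the niceness of $\Tcal^R$ and the fact that $N$ is a $\Tcal^R$-matroid transfer from the corresponding properties of $\Tcal$ and $N$ via the evident bijection between $\Tcal^R$-precircuits and the ``coarsened'' precircuits of $\Tcal$. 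Setting up this correspondence carefully and checking niceness (i.e.\ absence of phantom precircuits for the coarsened torsos) is the main technical obstacle.
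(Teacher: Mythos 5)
Your overall architecture matches the paper's: both define $\Psi$ as the ends appearing in the closure of some $N$-circuit, both reduce to showing that no end lies in the closure of a circuit and of a cocircuit simultaneously, and both finish by combining \autoref{2SumsO1} with \autoref{cir_c_scrawl}. (Note that the final contradiction should be to the general fact that a circuit and cocircuit of a matroid never meet in exactly one element, not to \autoref{never_meet_just_once}, which is specific to graphs.) The divergence, and the gap, is in the proof of the key claim. The paper does \emph{not} coarsen to a ray. It works directly in $\Tcal$: picking a ray $v_1, v_2, \ldots$ converging to $\omega$, it uses niceness to locate edges $f, g, h$ of $b, o, b$ straddling $v_1v_2$ and $v_2v_3$, takes an $M_\emptyset(\Tcal)$-circuit $o_b$ with $o_b \cap b = \{f, h\}$, and then splices an $\emptyset$-precircuit $(S_C, \hat C)$ from $\hat o_b$ on $T_{v_2 \to v_3}$, from $\hat o$ along the path $P$ joining $v_2$ to the node carrying $g$, and --- crucially --- from circuits $o_{t \to t'}$ supplied by niceness in each side-branch $tt' \in \partial$ hanging off $P$. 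Circuit elimination of $g$ between $o$ and the resulting $N$-circuit $C$, keeping $h$, and an induction along $P$ using that $\hat o(v_2)$ contains both dummy edges, forces the support tree of $C'$ into $T_{v_2 \to v_3}$ and hence $C' \cap b = \{h\}$, a contradiction.

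Your coarsening route is a genuinely different strategy, but the step you flag as ``the main technical obstacle'' is a real gap, not a routine verification. Your ``cleaner route'' presupposes that the partition of $E(N)$ induced by $T$, and hence its coarsening along $R$, is a tree decomposition of $N$ of \emph{adhesion $2$}. That each tree edge of $T$ induces a $2$-separation of $N$ is nowhere stated and must be extracted from the hypotheses that $N$ is a $\Tcal$-matroid and $\Tcal$ is nice; this is of comparable difficulty to the key claim itself. (The alternative definition $M^R(r_i) := M_{\Omega(T_i)}(\Tcal\restric_{T_i})$ founders on existence, as you note.) Even granting the decomposition, you would still need to check that $\Tcal^R$ is nice, that an $N$-circuit or $N$-cocircuit converges to $\omega$ in $(R, R^R)$ exactly when it does in $\Tcal$, and that $o$ and $b$ meet infinitely many $E(r_i)$ so that the $l < m < n$ in \autoref{ray_case} exist. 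None of this is supplied, and it is exactly the content that the paper's side-branch construction with $\partial$ handles directly. So the proposal identifies the right obstacle but does not overcome it; the paper's direct argument is what fills the hole.
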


 \begin{figure}
\begin{center}
 \includegraphics[width=6cm]{./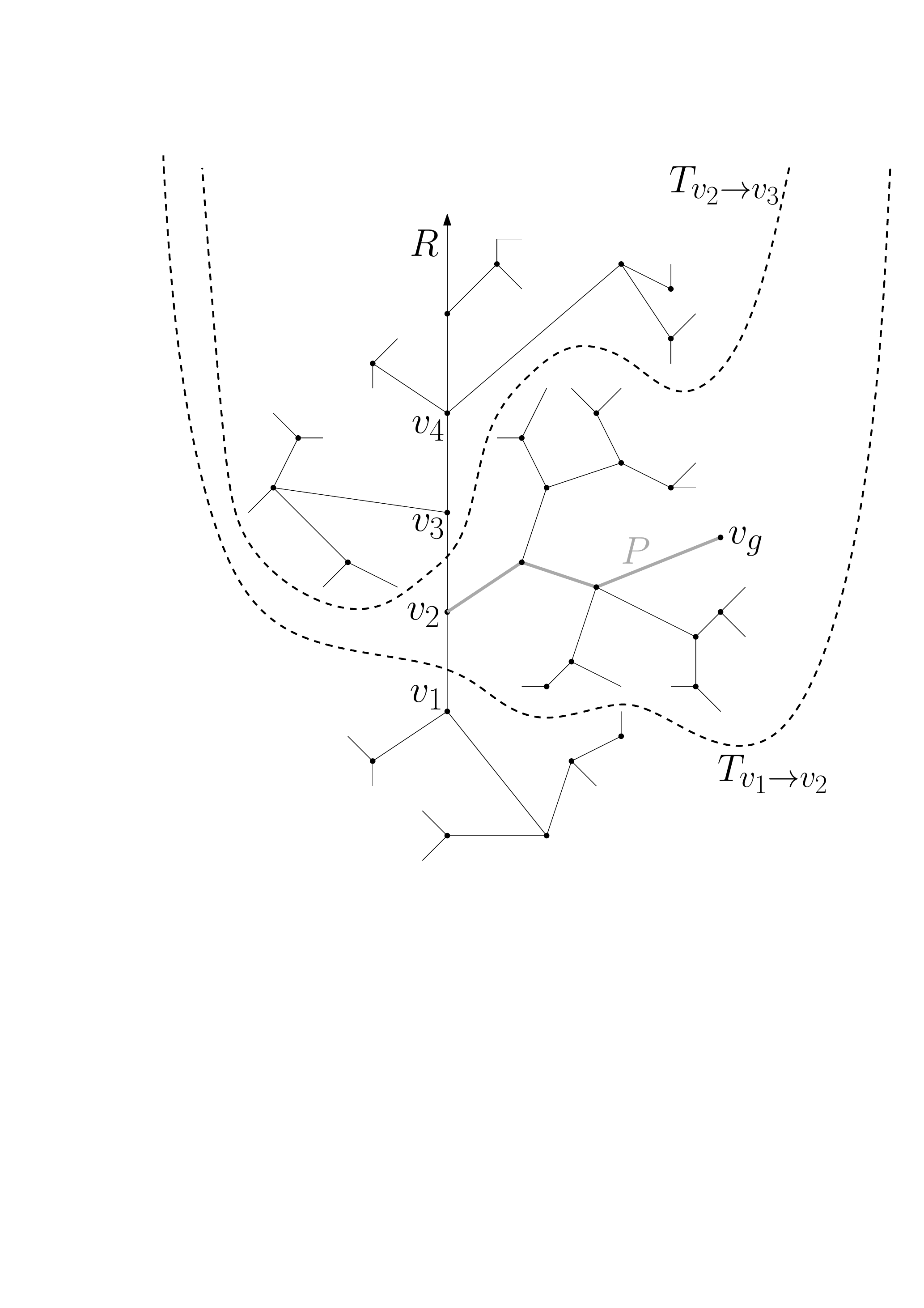}
\end{center} 
\caption{Objects appearing in the proof of \autoref{nice_tree}}	\label{fig:tree}
\end{figure}
\begin{proof}
 We begin by showing that there cannot be a circuit $o$ and a cocircuit $b$ of $N$ such that there is some end $\omega$ of $T$ in the closure of both $o$ and $b$. So suppose for a contradiction that there are such $o$, $b$, and $\omega$. We fix some notation, as illustrated in \autoref{fig:tree}. Pick a ray $R = v_1, v_2, \ldots$ in $T$ to $\omega$. By taking a suitable tail of $R$ if necessary, we may assume that there is some edge $f$ of $b$ in $E(\Tcal) \sm E(\Tcal_{v_1 \to v_2})$, some edge $g$ of $o$ in $E(\Tcal_{v_1 \to v_2}) \sm E(\Tcal_{v_2 \to v_3})$ and some edge $h$ of $b$ in $E(\Tcal_{v_2 \to v_3})$ (here we use that $\Tcal$ is nice). Since $o \cap b$ is finite, we may even assume that no edge of $o \cap b$ lies in $E(\Tcal_{v_1 \to v_2})$.

We may also assume that $o$ has an edge in $E(\Tcal_{v_2 \to v_1})$, so that both dummy edges of $E(v_2)$ are in $\hat o(v_2)$.

By \autoref{o_cap_b} there is some $M_{\emptyset}(\Tcal)$-circuit $o_b$ meeting $b$ only in $f$ and $h$. Let $(S, \hat o)$ be an $\Omega(T)$-precircuit representing $o$, and $(S_b, \hat o_b)$ be an $\emptyset$-precircuit representing $o_b$. Let $v_g$ be the node of $T$ with $g \in E(v)$. Let $P$ be the path joining $v_2$ to $v_g$ in $T$. Let $\partial$ be the set of edges $tt'$ of $T$ with $t$ in $V(P)$ but $t'$ not in either $V(P)$ or $V(T_{v_2 \to v_3})$. For each edge $tt' \in \partial$ there is by niceness of $\Tcal$ some $M_{\emptyset}(\Tcal_{t \to t'})$-circuit $o_{t \to t'}$ through $e(tt')$. Let $(S_{t \to t'}, \hat o_{t \to t'})$ be an $M_{\emptyset}(\Tcal_{t \to t'})$-precircuit representing $o_{t \to t'}$. 

Now we build a $\emptyset$-precircuit $(S_C, \hat C)$ from all this data as follows. First we set $$S_C = (S_b \cap T_{v_2 \to v_3}) \cup P \cup \bigcup_{tt' \in \partial} S_{t \to t'}.$$
Then we take $\hat C(u)$ to be $\hat o_b(u)$ for $u \in (S_b \cap T_{v_2 \to v_3})$, $\hat o(u)$ for $u \in P$ and $\hat o_{t \to t'}(u)$ for $u \in S_{t \to t'}$. Let $C$ be the underlying circuit of $(S_C, \hat C)$. By \autoref{cir_in_cir}, $C$ is an $N$-circuit.

We now apply circuit elimination in $N$ to the circuits $o$ and $C$, eliminating the edge $g$ and keeping the edge $h$. Call the resulting circuit $C'$, and let $(S_{C'}, \hat C')$ be an $\Omega(T)$-precircuit representing $C'$. Let the vertices of $P$ be, in order, $v_g = p_1, p_2, \ldots p_k = v_2$. We shall show by induction on $i$ that $p_i \not \in S_{C'}$. For the base case, we note that if $v_g$ were in $S_{C'}$ we would have to have $\hat C' (v_g) \se \hat o(v_g) \sm \{g\}$, which is impossible. For the induction step, we similarly note that if $p_{i+1}$ were in $S_{C'}$ we would have to have $\hat C' (p_{i+1}) \se \hat o(p_{i+1}) \sm \{e(p_ip_{i+1})\}$, by the induction hypothesis, which is impossible. In particular, we deduce that $v_2 \not \in S_{C'}$. On the other hand, we know that $h \in C'$ so that $S_{C'} \se T_{v_2 \to v_3}$, so that $C' \cap b = \{h\}$, a contradiction.

We have now established that there cannot be a circuit $o$ and a cocircuit $b$ of $N$ such that there is some end $\omega$ of $T$ in the closure of both $o$ and $b$. Let $\Psi$ be the set of ends of $T$ that appear in the closure of some circuit of $N$. Thus every $N$-circuit is a $\Psi$-circuit and every $N$-cocircuit is a $\Psi\ct$-cocircuit. Since by \autoref{2SumsO1} no $\Psi$-circuit ever meets a $\Psi\ct$-cocircuit just once, we may apply \autoref{cir_c_scrawl} to deduce that $N = M_{\Psi}(\Tcal)$ as required.
\end{proof}

\section{Tame $G$-matroids are $\Psi$-matroids}\label{sec:psi}

Let $G$ be a locally finite graph. Recall that a matroid $N$ on the ground set $E(G)$ is a $G$-matroid if $\Ccal(N)\se \Ccal(M_C(G))$ and  $\Ccal(N^*)\se \Ccal(M_{FC}(G)^*)$. Since $\Ccal(M_{FC}(G)) \se \Ccal(M_C(G))$ and $\Ccal(M_C^*(G)) \se \Ccal(M_{FC}^*(G))$ both $M_{FC}(G)$ and $M_C(G)$ are $G$-matroids, and an argument like that for \autoref{cir_in_cir} shows that for any $G$-matroid $N$ we have $\Ccal(M_{FC}(G)) \se \Ccal(N)$ and $\Ccal(M_C^*(G)) \se \Ccal(N^*)$. The aim of this section is to prove the following.

\begin{thm}\label{cycle_matroids}
Let $G$ be a locally finite graph, and let $N$ be a tame $G$-matroid.
There there is some $\Psi\se\Omega(G)$ such that $N=M_{\Psi}(G)$.
\end{thm}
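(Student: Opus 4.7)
The plan is to deduce \autoref{cycle_matroids} from \autoref{nice_tree} by realising $G$ as a nice tree of finite matroids of overlap 1 whose ends match those of $G$.

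First, I would invoke the tree decomposition of a locally finite graph used in \cite{BC:psi_matroids} to construct $\Psi$-matroids. This produces a tree of matroids $\Tcal = (T, M)$ of overlap 1 in which each $M(t)$ is a finite matroid, together with a canonical homeomorphism from $\Omega(T)$ onto $\Omega(G)$. The compatibility that I would rely on from \cite{BC:psi_matroids} is that, under this homeomorphism, for every $\Psi \se \Omega(G)$ the $\Psi$-circuits of $G$ agree with the $\Psi$-circuits of $\Tcal$ and the $\Psi\ct$-bonds of $G$ agree with the $\Psi\ct$-cocircuits of $\Tcal$. Specialising to $\Psi = \emptyset$ and $\Psi = \Omega(G)$ recovers $M_\emptyset(\Tcal) = M_{FC}(G)$ and $M_{\Omega(T)}(\Tcal) = M_C(G)$. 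Niceness of $\Tcal$ then follows: a phantom precircuit would correspond, in the graph, to an infinite structure contributing no real edges on one side of some separation, so it would force either a proper containment among circuits of $M_{FC}(G)$ or a genuine circuit of $M_C(G)$ to collapse under truncation, and dually for phantom precocircuits.

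Second, I would verify that the tame $G$-matroid $N$ is a $\Tcal$-matroid. By the remarks preceding \autoref{cycle_matroids}, $\Ccal(N) \se \Ccal(M_C(G)) = \Ccal(M_{\Omega(T)}(\Tcal))$ and $\Ccal(N^*) \se \Ccal(M_{FC}(G)^*) = \Ccal(M_\emptyset(\Tcal)^*)$, so every $N$-circuit is an $\Omega(T)$-circuit of $\Tcal$ and every $N$-cocircuit is an $\emptyset$-cocircuit of $\Tcal$. Applying \autoref{nice_tree} to this tame $\Tcal$-matroid yields some $\Psi \se \Omega(T)$ with $N = M_\Psi(\Tcal)$, and transporting $\Psi$ across the homeomorphism $\Omega(T) \cong \Omega(G)$ produces the desired $\Psi \se \Omega(G)$ with $N = M_\Psi(G)$.

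The main obstacle is already absorbed into machinery now at our disposal. The combinatorial heart of the argument, that no end lies in the closure of both a circuit and a cocircuit of a tame $\Tcal$-matroid, is precisely the content of \autoref{nice_tree}. What is left is the translation between topological circles and bonds of $G$ and the precircuits and precocircuits of the decomposition $\Tcal$, which is handled in \cite{BC:psi_matroids}. Given these, the proof reduces to checking the hypotheses of \autoref{nice_tree} and quoting it.
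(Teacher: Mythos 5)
Your plan hinges on realising $G$ as a tree $\Tcal=(T,M)$ of finite matroids \emph{of overlap $1$} with $\Omega(T)\cong\Omega(G)$, so that \autoref{nice_tree} applies. That is precisely the step that fails. The tree decomposition of a locally finite graph used in \cite{BC:psi_matroids} to build $\Psi$-matroids is a decomposition into finite torsos along finite-order vertex separators, and the resulting tree of matroids does not have overlap $1$ in general: adjacent torsos typically share more than one dummy edge, and the gluing along such a tree even requires the pieces to be representable over a common field. The paper says this explicitly in the Outlook: \emph{``$\Psi$-matroids for a locally finite graph $G$ are naturally thought of as being constructed from trees of finite matroids associated to $G$ (though these trees of matroids need not have overlap $1$).''} Concretely, a $3$-connected locally finite graph (say the infinite grid) has no nontrivial adhesion-$2$ tree decomposition at all, so there is simply no overlap-$1$ tree of finite matroids with the property you need. \autoref{nice_tree} is therefore not applicable, and the reduction you describe does not go through. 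The analogue of \autoref{nice_tree} for trees of arbitrary overlap is stated in the Outlook only as a conjecture, not a theorem.

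This is why the paper proves \autoref{cycle_matroids} by a direct graph-theoretic argument rather than by reduction to \autoref{nice_tree}. The combinatorial heart there is the same in spirit — showing that no end lies in the closure of both an $N$-circuit and an $N$-cocircuit, then invoking \autoref{never_meet_just_once} and \autoref{cir_c_scrawl} — but the first step is carried out with dedicated lemmas (\autoref{shortcut_exists}, \autoref{shortcut_can_be_used}, \autoref{shortcut_can_be_used_reloaded}, \autoref{make_1_ended}, \autoref{o_cap_b=0}, \autoref{no_end_in_closure}) that use rerouting along finite paths, Ramsey's theorem, and infinite circuit elimination inside the graph itself. If you want to salvage your plan, you would first have to prove the arbitrary-overlap version of \autoref{nice_tree} (the open conjecture) or restrict attention to graphs admitting an adhesion-$2$ tree decomposition into finite parts, which excludes most interesting cases.
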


For the rest of this section we fix some locally finite graph $G$ and some tame $G$-matroid $N$.

In this section, we will have to use two different notions of path. 
Finite paths in graphs will simply be called \emph{paths}, whereas 
paths in the topological sense, namely continuous images of the closed unit interval,
will be called \emph{topological paths}.

For a pair of points on a topological circle, there are two arcs joining them through the circle. To allow us to distinguish them, we shall make use of orientations of circles and topological paths. For distinct points $x$ and $y$ on an oriented circle $\vec o$ we use $x \vec o y$ to denote the (oriented) topological path from $x$ to $y$ through $o$ whose orientation agrees with that of $\vec o$. We denote the other topological path by $x \cev o y$. If $x = y$, we do not take the trivial topological path but the topological path that goes all the way around the circle.

 \begin{figure}
\begin{center}
 \includegraphics[width=4cm]{./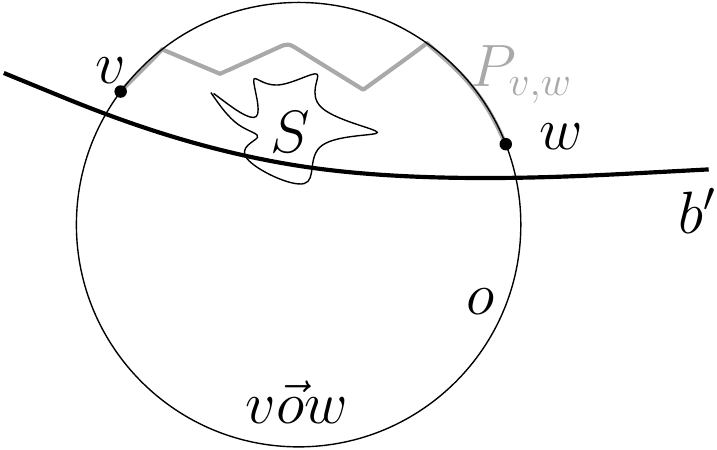}
\end{center} 
\caption{The situation of \autoref{shortcut_exists}.}	\label{fig:shortcut_exists}
\end{figure}

\begin{lem}\label{shortcut_exists}
Let $o$ be a topological circle in $G$.
Let $v,w\in V(o)$ and let $S$ be a finite set of vertices avoiding $V(o)$.
Then for any orientation $\vec{o}$ of $o$, there is a finite $v$-$w$-path $P_{v,w}$ not meeting $v\vec{o}w$ in interior points and avoiding $S$.

Moreover 
if $v\vec{o}w$ has at least two edges, then
there is a bond $b'$ of $G$ that 
has  $P_{v,w}\cup v\cev{o}w$ on one side and all interior vertices of $v\vec{o}w$ on the other side.
\end{lem}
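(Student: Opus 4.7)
The plan is to prove the two claims in sequence, using the arc $\beta := v\cev{o}w$ as a topological template in both cases.

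For the finite path $P_{v,w}$, I would use that $\beta$ is a compact topological arc in $|G|$ from $v$ to $w$ which meets $v\vec{o}w$ only at the endpoints $v,w$ (because $o$ is a topological circle) and hence avoids the vertex set $(V(v\vec{o}w)\setminus\{v,w\})\cup S$; since $\beta$ is closed, no end on $\beta$ can be an accumulation point of the closed set $V(v\vec{o}w)$ or of the finite set $S$. Therefore at every end $\omega$ visited by $\beta$ there is a basic neighbourhood $\hat C(F_\omega,\omega)$ disjoint from $(V(v\vec{o}w)\setminus\{v,w\})\cup S$. By compactness of $\beta$ I would cover it by finitely many such end-neighbourhoods together with neighbourhoods of the finitely many vertices and edge-interiors it visits between them, and at each end $\omega$ in the cover replace the two rays of $\beta$ through $\omega$ by a finite path joining them inside $C(F_\omega,\omega)$; such a path exists because any two rays in the same end of a locally finite graph can be joined by a finite path in every basic neighbourhood of that end. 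Splicing these shortcuts into the finite-path portions of $\beta$ gives $P_{v,w}$.

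For the bond $b'$ I would argue topologically. Set $K := v\cev{o}w\cup P_{v,w}$, a closed connected subset of $|G|$, let $U$ be the topological component of $|G|\setminus K$ that contains the connected set $v\vec{o}w\setminus\{v,w\}$, and put $V_K := V(K)$, $Y := V(G)\cap U$, $X := V(G)\setminus Y$, $b' := E_G(X,Y)$. Then $V(P_{v,w}\cup v\cev{o}w)\subseteq V_K\subseteq X$ and the interior vertices of $v\vec{o}w$ lie in $Y$, while under the assumption that $v\vec{o}w$ has at least two edges its first edge witnesses $b'\neq\emptyset$; so it remains to check that $G[X]$ and $G[Y]$ are connected, from which $b'$ will be a bond. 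Connectedness of $G[Y]$ is obtained by applying the Part-1 shortcut argument inside $U$: $U$ is an open connected subset of the locally connected space $|G|$, hence arc-connected, and no end of $U$ accumulates $V_K$ (ends of $v\cev{o}w$ lie in $K$, not $U$, and $V(P_{v,w})$ is finite), so one can shrink each end-neighbourhood so that it lies inside $U$ and avoids $V_K$, giving a finite $y_1$--$y_2$ path in $G[Y]$.

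For $G[X]$ I would first check that $V_K$ is already connected in $G[V_K]$. Because $P_{v,w}$ was constructed by shortcutting $\beta$ end by end, the path $P_{v,w}$ passes through an entry/exit ray vertex of every section of $v\cev{o}w$ between consecutive ends (and through every finite middle vertex of such a section); the edges of $v\cev{o}w$ then attach each whole section to this finite spine inside $V_K$. Next, each further component $U_j$ of $|G|\setminus K$ with $V(U_j)\neq\emptyset$ gives a connected subgraph $G[V(U_j)]\subseteq G[X]$ by the same arc-shortcut argument inside $U_j$, and because $G$ is connected and $V(U_j)$ is a component of $G - V_K$ it has at least one edge of $G$ to $V_K$; stringing these together shows $G[X]$ is connected.

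The main obstacle is precisely the connectedness of $G[X]$ when $v\cev{o}w$ passes through many ends: a priori $V(v\cev{o}w)$ fragments in $G[V_K]$ into the sections cut off by those ends, and the only way to reconnect two sections across an end could have required passing through vertices of $Y$. The resolution is that $P_{v,w}$ itself is constructed by end-by-end shortcutting of $\beta$, so $P_{v,w}$ supplies the bridge inside $V_K$ across every end of $v\cev{o}w$ and no detour through $Y$ is needed. A related subtlety is that an edge of $G$ with endpoints in two distinct components $U_i\neq U_j$ would have its interior in both, contradicting that components are disjoint; hence no edge of $G$ runs directly between two $U_j$'s, which together with connectedness of $G$ is what forces each non-empty $V(U_j)$ to be joined to $V_K$ by an edge of $G$.
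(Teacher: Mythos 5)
Your proof is correct in outline but takes a genuinely different, more hands-on route than the paper's. The paper's argument is essentially two lines: delete $S$, observe that $o$ is still a circuit of the topological cycle matroid $M_C(G\sm S)$, and apply \autoref{o_cap_b} in that matroid to the first and last edges $e_v,e_w$ of $v\vec{o}w$ to obtain a finite bond $b$ of $G\sm S$ meeting $o$ exactly in $\{e_v,e_w\}$; the side of $b$ containing $v$ and $w$ also contains all of $v\cev{o}w$ and is connected, so any $v$-$w$ path inside it serves as $P_{v,w}$, and $b$ extends to a finite bond $b'$ of $G$. You instead build $P_{v,w}$ from scratch by covering the compact arc $v\cev{o}w$ in $|G|$ with finitely many basic neighbourhoods and shortcutting across the end-neighbourhoods, and then manufacture the cut as $E_G(X,Y)$ from the topological component $U$ of $|G|\setminus K$ containing the interior of $v\vec{o}w$. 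The matroid route is far shorter because the hard part — that the resulting cut is actually a \emph{bond}, with both sides connected — comes for free from $M_C(G\sm S)$ being a matroid, which also yields finiteness of $b$; your route is self-contained but must verify the connectedness of $G[X]$ and $G[Y]$ by hand, and you correctly identify the connectedness of $G[V(K)]$ as the delicate point. Your resolution — that the end-by-end construction of $P_{v,w}$ supplies a bridge in $G[V(K)]$ across every end of $v\cev{o}w$ — is the right idea and can be pushed through, but as written it is only sketched, and the same arc-shortcutting machinery is invoked again without detail for the connectedness of $G[Y]$ and of the remaining components $U_j$; so the approach is sound but becomes considerably longer once the omitted verifications (arcs entering an end-neighbourhood several times, end-neighbourhoods of the cover containing further ends of $\beta$, arc-connectedness of open connected subsets of $|G|$) are filled in.
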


\autoref{fig:shortcut_exists} gives an overview of the terminology used in this lemma.

\begin{proof}
The proof is trivial if $v=w$. Thus we may assume that $v\neq w$.
Let $e_v$ be the first edge on $v\vec{o}w$ and $e_w$ be the last edge on $v\vec{o}w$.
Note that $e_v$ and $e_w$ exist since $v$ and $w$ are vertices. If $e_v=e_w$, we pick
$P_{v,w}=e_v$. So we may assume that $e_v \neq e_w$.

Let $G'=G\sm S$. Since $o$ is a topological circle in $|G'|$, 
there is a finite bond $b$ of $G'$ meeting $o$ in precisely $e_v$ and $e_w$.

All edges and vertices of $v\vec{o}w-e_v-e_w-v-w$ are on the same side of $b$.
Let $C$ be the other side. Note that $v$ and $w$ are in $C$. Now let $P_{v,w}$ 
be some path in $C$ joining $v$ and $w$.

The bond $b$ extends to a finite bond $b'$ of $G$ by adding finitely many deleted edges. 
The bond $b'$ has the desired property.
\end{proof}

 \begin{figure}
\begin{center}
 \includegraphics[width=4cm]{./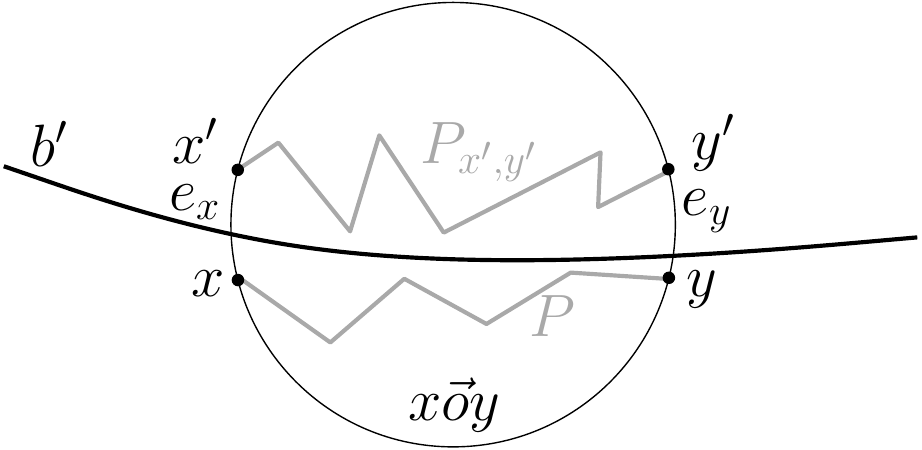}
\end{center} 
\caption{The situation of \autoref{shortcut_can_be_used}.}	\label{fig:shortcut_can_be_used}
\end{figure}

\begin{lem}\label{shortcut_can_be_used}
Let $o$ be a circuit of the $G$-matroid $N$, and $\vec{o}$ be some orientation of $o$.
Further, let $x,y\in V(o)$ and let $\vec{P}$ be an 
$x$-$y$-path meeting $o$ in precisely $x$ and $y$.

Then $x\vec{o}y\cev{P}x\in \Ccal(N)$.
\end{lem}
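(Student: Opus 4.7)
My plan is to show that $o' := x\vec o y \cup \vec P$ (as an edge set) is an $N$-circuit by first recognising $o'$ as a topological circle of $|G|$ and then showing it is $N$-dependent. For the topological observation, $o$ decomposes into its two arcs $x\vec o y$ and $x\cev o y$ meeting at $x, y$, and $\vec P$ is a finite topological arc from $x$ to $y$ meeting $o$ only at these endpoints; so replacing $x\cev o y$ by $\vec P$ yields another closed subset of $|G|$ homeomorphic to $S^1$, giving $o' \in \Ccal(M_C(G))$. Crucially, any $N$-circuit contained (as an edge set) in $o'$ must equal $o'$, because every $N$-circuit is itself a topological circle of $|G|$ and no proper closed connected subset of $S^1$ is homeomorphic to $S^1$. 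It therefore suffices to show that $o'$ contains some $N$-circuit, i.e., that $o'$ is $N$-dependent.

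For $N$-dependence I would appeal to \autoref{is_scrawl}: $o'$ is an $N$-scrawl -- and hence, being nonempty, contains an $N$-circuit -- once we know no $N$-cocircuit meets $o'$ in exactly one edge. Suppose for contradiction that $b$ is an $N$-cocircuit with $o' \cap b = \{e\}$, and set $\alpha = |b \cap x\vec o y|$, $\beta = |b \cap x\cev o y|$, $\gamma = |b \cap \vec P|$. Tameness of $N$ gives $\alpha + \beta = |b \cap o| < \infty$, the matroid axioms give $\alpha + \beta \neq 1$, and the assumption forces $\alpha + \gamma = 1$. The easiest subcase is $\alpha = \beta = 0$, $\gamma = 1$: then $o$ is a connected subgraph of $G$ disjoint from $b$, so $V(o)$ lies on one side of the bond $b$, while $\vec P - e$ connects both endpoints of $e$ through $x$ and $y$ into this same side, contradicting $e \in b$. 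For the remaining subcases I plan a parity-of-crossings argument across the three arcs $x\vec o y$, $x\cev o y$, $\vec P$ all joining $x$ to $y$: when $b$ cleanly separates $|G|$ into two topological pieces (as finite bonds do, with each end of $G$ assigned cleanly to one side), each arc crosses $b$ an even or odd number of times depending only on whether $x, y$ lie on the same side, forcing $\alpha \equiv \beta \equiv \gamma \pmod 2$, which is incompatible with $\alpha + \gamma = 1$.

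The main obstacle will be running the parity argument when $b$ is an infinite bond: such a bond need not separate $|G|$ into two components (a shared end can bridge the two sides, as infinite-ladder-type graphs illustrate), so crossings are not a priori well-defined modulo $2$. I expect the fix to use tameness crucially: since $o \cap b$ is finite, a large enough finite subgraph of $G$ captures all the $b$-edges that $o$ or $\vec P$ ever meets, and within that finite piece the separation and parity statements behave classically, with some extra bookkeeping for the ends possibly visited by the arcs. Once the parity argument goes through, no $N$-cocircuit meets $o'$ in exactly one edge, so $o'$ is an $N$-scrawl, contains an $N$-circuit, and by the topological observation of the first paragraph that circuit must equal $o'$.
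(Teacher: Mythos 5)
Your high-level strategy differs from the paper's: you try to certify $o' := x\vec{o}y\cup P$ as a scrawl (so that it contains an $N$-circuit, which must then be all of $o'$ since $o'$ is a topological circle), whereas the paper instead derives $\Ccal(M_{FC}(G))\se\Ccal(N)$, produces a suitable \emph{finite} $N$-circuit via \autoref{shortcut_exists}, circuit-eliminates against $o$, and pins down the result using the finite bond $b'$ from \autoref{shortcut_exists} together with the Jumping-Arc Lemma. The topological half of your argument (that $o'$ is a circle of $|G|$, and that any $N$-circuit $\se o'$ must equal $o'$ because $\Ccal(N)\se\Ccal(M_C(G))$) is correct. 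The problem is the scrawl half.

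The gap is in the claim that no $N$-cocircuit $b$ meets $o'$ exactly once, and it is not a bookkeeping issue that tameness patches. Your ``easiest subcase'' already goes wrong: you assert that $o$ is a connected subgraph of $G$, but a topological circle need not be connected as a subgraph --- the edges of $o$ can fall into several components that only join up through ends (for instance, the two boundary double rays of the infinite ladder form a circle while being a disconnected subgraph). So $o\cap b=\emptyset$ does not put $V(o)$ on one side of $b$, and $x,y$ can a priori lie on opposite sides. The same phenomenon kills the general parity argument: an arc $x\vec{o}y$ with no edge in $b$ can still run from $C_1$ to $C_2$ by passing through an end lying in the closure of both sides, so its crossing count with $b$ is not determined mod $2$ by which sides $x$ and $y$ are on. Ruling this out amounts to knowing that an $N$-circuit and an $N$-cocircuit never share an end in their closures --- but that is the main content of \autoref{cycle_matroids}, which this lemma is a step toward, so you cannot assume it here. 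Your proposed fix (pass to a finite subgraph capturing $(o\cup P)\cap b$) does not reach the offending ends: they lie outside every finite subgraph, and restricting $b$ to a finite piece destroys the bond structure you need, so the ``classical parity'' you want to invoke is not available there either. The paper avoids all of this by never reasoning about the possibly infinite cocircuit $b$ directly; it constructs its own finite bond $b'$ with controlled intersection with $o$, for which the Jumping-Arc Lemma genuinely applies.
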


\begin{proof}
The proof is trivial if $x=y$. Thus we may assume that $x\neq y$.
Let $e_x$ be the first edge on $x\cev{o}y$ and $e_y$ be the last edge on $x\cev{o}y$.
Let $x'$ be the endvertex of $e_x$ that is not $x$, and $y'$ be the endvertex of $e_y$ that 
is not $y$, as depicted in \autoref{fig:shortcut_can_be_used}.

Applying \autoref{shortcut_exists} to $\vec{o}$ with $S=V(P)-x-y$, yields
an $x'$-$y'$-path $P_{x',y'}$, and a bond $b'$ as in that lemma.
By assumption the finite circuit $x\vec{P}ye_yy'\cev{P_{x',y'}}x'e_xx$ is an $N$-circuit.
See \autoref{fig:shortcut_can_be_used} to get an overview of all the definitions.

Now we apply circuit elimination in $N$ to this new circuit and $o$ eliminating $e_x$ and keeping some $z\in E(P)$. Note that $z$ exists since $x\neq y$.
We obtain an $N$-circuit $o'\se (o-e_x)\cup P\cup P_{x',y'}$ including $z$.

It remains to show that $o'= x\vec{o}y\cup P$.
Since each vertex of $G$ is incident with $0$ or $2$ edges of $o'$, we conclude that 
each edge adjacent to $z$ on $P$ is in $o'$. In fact an inductive argument yields that
$P\se o'$. 

If $x\cev{o}y$ consists of a singe edge $xy$, then by the same argument $xy$ also cannot be in $o'$.
Thus $o'\se x\vec{o}y\cev{P}x$. Since the latter is a topological cycle, we must have equality, hence $x\vec{o}y\cev{P}x \in \Ccal(N)$.
Thus we may assume that $x\cev{o}y$ includes at least two edges.

We know that $o'$ is the union of $P$ and some topological arc $A$ from $x$ to $y$. 
The edge set of this arc is included in $L:=(o-e_x)\cup P_{x',y'}$. 
The set $L$ meets $b'$ precisely in $e_y$. 
Let $K$ be the side of $b'$ not containing $x$ and $y$.
Suppose for a contradiction that $L$ includes an edge $e_k$ from $K$.
Then there are two disjoint arcs $L_x$ and $L_y$ from $e_k$ to $x$ and from $e_k$ to $y$.
By the Jumping-Arc Lemma~\cite[Lemma 8.5.3]{DiestelBook10}, both of these have to meet $b'$,
contradicting the fact that $L$ meets $b'$ just in $e_y$. 

This means that $L\se x\vec{o}y$. Since $x\vec{o}y$ is an $x$-$y$-arc, we actually get $L=x\vec{o}y$.
Thus we have shown that $o'= x\vec{o}y\cup P$, which completes the proof.
\end{proof}

\begin{cor}\label{shortcut_can_be_used_reloaded}
Let $o\in \Ccal(N)$, and $\vec{o}$ be some orientation of $o$.
Further, let $x,y\in V(o)$ and let $\vec{P}$ be an 
$x$-$y$-path meeting $x\vec{o}y$ not in interior points.

Then $x\vec{o}y\cev{P}x\in \Ccal(N)$.
\end{cor}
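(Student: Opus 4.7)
My plan is to prove \autoref{shortcut_can_be_used_reloaded} by induction on $k := |V(P) \cap V(x\cev{o}y) \sm \{x, y\}|$, the number of vertices of $P$ lying in the interior of the ``backward'' arc. The base case $k=0$ is immediate: the hypothesis that $P$ avoids the interior of $x\vec{o}y$, combined with $k=0$, forces $V(P) \cap V(o) = \{x,y\}$, so \autoref{shortcut_can_be_used} applies to give the conclusion directly.

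For the inductive step with $k \geq 1$, I would let $z$ be the first vertex of $P$ (in the order from $x$ toward $y$) lying in $V(x\cev{o}y) \sm \{x,y\}$, and split $P$ as $P_1 \cdot P'$, where $P_1$ is the initial $x$--$z$ segment and $P'$ is the remaining $z$--$y$ segment. By the choice of $z$ and the hypothesis on $P$, the subpath $P_1$ meets $o$ only at $x$ and $z$, so \autoref{shortcut_can_be_used} applied to $o$, $\vec o$, $P_1$ yields that $o' := x\vec o z \cup P_1$ is an $N$-circuit. Since $z$ lies on $x\cev o y = y\vec o x$, the arc $x\vec o z$ passes through $y$, so $o' = x\vec o y \cup y\vec o z \cup P_1$ as an edge set.

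I would then orient $o'$ by the orientation $\vec{o'}$ such that $z\vec{o'}y = P_1 \cup x\vec o y$ (i.e., from $z$ along $P_1$ to $x$ and then along $x\vec o y$ to $y$), so that $z\cev{o'}y$ has edge set $y\vec o z$. The plan is to apply the corollary inductively to $(o', \vec{o'}, z, y, P')$. The hypothesis that $P'$ meets $z\vec{o'}y$ in no interior points is checked by noting that the interior vertices of $z\vec{o'}y$ are the interior vertices of $P_1$, the vertex $x$, and the interior vertices of $x\vec o y$: none of these lies on $P'$, since $V(P_1) \cap V(P') = \{z\}$, $x \notin V(P')$, and $P$ (hence $P'$) misses the interior of $x\vec o y$ by hypothesis. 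Moreover, the induction variable $|V(P') \cap V(z\cev{o'}y) \sm \{z,y\}|$ is bounded by $|V(P) \cap V(x\cev o y) \sm \{x, y, z\}| = k - 1 < k$, so the induction is valid. The inductive hypothesis yields $z\vec{o'}y \cev{P'} z \in \Ccal(N)$, whose edge set is $P_1 \cup x\vec o y \cup P' = x\vec o y \cup P$, as required.

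The main obstacle I expect is the orientation bookkeeping: one must identify which arc of the newly constructed circle $o'$ should be designated $z\vec{o'}y$ so that (a) $P'$ still avoids its interior and (b) the edge set of the arc plus $P'$ assembles into the desired $x\vec o y \cup P$. Minor degenerate cases such as $P_1$ being a single edge shared with $o$, or $z$ being adjacent to $y$, should reduce to the same argument or else fall under the $x=y$ convention already used in \autoref{shortcut_can_be_used}.
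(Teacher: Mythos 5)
Your proof is correct and follows essentially the same strategy as the paper's: pick the first vertex $z$ of $P$ lying in the interior of $x\cev{o}y$, apply \autoref{shortcut_can_be_used} to the initial segment $xPz$, reorient the resulting circuit, and recurse on the remaining segment $zPy$. The paper inducts on $|V(P)\cap V(o)|$ rather than your $k$, but these differ only by the fixed constant $2$ (the endpoints $x,y$), so the argument is the same; your write-up is more explicit than the paper's about the orientation bookkeeping and the verification that the inductive hypothesis applies.
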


\begin{rem}
 The only difference between \autoref{shortcut_can_be_used} and 
\autoref{shortcut_can_be_used_reloaded} is that in the second the path $P$ may meet 
$o$ in some of the interior points.
\end{rem}

\begin{proof}
We prove this by strong induction on $|P\cap o|$. 
Let $z$ be the second point in the order of $P$ in $P\cap o$ (the first such point is $x$). 
Now we apply \autoref{shortcut_can_be_used} to $o$ and $xPz$ and obtain a new circuit 
$o_z:=x\vec{o}z\cev{P}x$ and a new path $P_z:=zPy$.
Since $|o_z\cap P_z|<|o\cap P|$, we may apply the induction hypothesis.
\end{proof}


\begin{lem}\label{make_1_ended}
Let $o\in \Ccal(N)$ and let $\omega$ be an end of $o$. Then there is $o'\in \Ccal(N)$
that has only the end $\omega$ in its closure.
\end{lem}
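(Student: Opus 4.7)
The plan is to apply the shortcut lemma \autoref{shortcut_can_be_used_reloaded} to $o$ in order to replace the ``bad'' arcs of $\bar o$ (those whose closure contains ends of $G$ other than $\omega$) by finite paths in $G$, resulting in an $N$-circuit whose closure has only $\omega$ as an end.

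In the favourable case where $\omega$ is an isolated end of $\bar o$ (no other end of $G$ accumulates to $\omega$ along the topological circle $\bar o$), a single shortcut suffices. I would pick $x, y \in V(o)$ on opposite sides of $\omega$ within a small open arc of $\bar o$ meeting $\omega$ but no other end, so that $x\vec o y$ through $\omega$ has $\omega$ as its only end in closure. \autoref{shortcut_exists} with empty $S$ then supplies a finite $x$-$y$-path $P$ in $G$ meeting $x\vec o y$ only at $x$ and $y$, and \autoref{shortcut_can_be_used_reloaded} gives $o' := x\vec o y \cev P x \in \Ccal(N)$, whose closure has $\omega$ as its only end.

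In the general case, other ends of $G$ may accumulate at $\omega$ along $\bar o$ and a single shortcut does not suffice. The plan here is to iterate the construction along a nested sequence $S_1 \se S_2 \se \ldots$ of finite vertex sets with $\bigcap_n \hat C(S_n, \omega) = \{\omega\}$, at each stage applying \autoref{shortcut_can_be_used_reloaded} with $x_n, y_n$ chosen deep inside $\hat C(S_n, \omega)$ to replace the part of the current $N$-circuit lying outside $\hat C(S_n, \omega)$ by a finite path. If the shortcuts are arranged so that each modifies only the outer region of the previous circuit, the sequence of $N$-circuits stabilises edgewise, and the edgewise limit $o'$ is a double ray in $G$ converging to $\omega$ on both sides; in particular, $\bar{o'}$ is a topological circle with $\omega$ as its only end. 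Since a double ray in $G$ contains no proper non-empty topological sub-cycle (every vertex on it has $o'$-degree $2$, so edge-closure propagates and forces any sub-cycle to be everything), the stabilisation together with \autoref{is_scrawl} makes $o'$ an $N$-scrawl, and any $N$-circuit it contains must equal $o'$.

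The main obstacle is the iterative construction in the general case: one must choose $x_n, y_n, P_n$ at each stage so that the successive shortcuts are disjoint, and verify that the edgewise limit is genuinely a topological cycle (rather than just an edge set) to which \autoref{is_scrawl} applies.
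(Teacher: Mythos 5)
Your construction of a double ray $o'$ both of whose tails converge to $\omega$ is the right target, and the use of \autoref{shortcut_can_be_used_reloaded} to replace arcs of $o$ by finite paths is exactly what the paper does. The paper's set-up is somewhat cleaner than your iterative one: rather than iterating along a nested family $\hat C(S_n,\omega)$ and arguing about stabilisation, it simply fixes a $\Zbb$-indexed sequence of edges $e_i$ on $o$ whose positive and negative tails both converge to $\omega$, chooses vertex-disjoint shortcut paths $P_i$ between consecutive $e_i$'s all at once, and lets $o'$ be the double ray $\ldots P_{-1}e_0P_0e_1P_1\ldots$. Your two-case split (isolated $\omega$ vs.\ general) is unnecessary, since the general construction subsumes the special case.

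The genuine gap is the last step: showing $o'\in\Ccal(N)$. You say ``the stabilisation together with \autoref{is_scrawl} makes $o'$ an $N$-scrawl,'' but this is precisely where the argument is nontrivial and where tameness is indispensable. To apply \autoref{is_scrawl} you must show that $o'$ never meets an $N$-cocircuit just once, and edgewise stabilisation does not give you this for free: a cocircuit $b$ of a $G$-matroid is a bond of $G$ and may be infinite, so there is no reason for the circuits in your sequence to eventually agree with $o'$ on all of $b$. The paper's argument is by contradiction: if $o'$ is not a circuit then (since a double ray contains no proper topological subcircle) $o'$ is independent, so some cocircuit $b$ meets $o'$ in a single edge $e_0$; tameness of $N$ forces $o\cap b$ to be finite, hence only finitely many of the arcs $t_i\vec o s_{i+1}$ of $o$ meet $b$; shortcutting exactly those finitely many arcs (a finite number of applications of \autoref{shortcut_can_be_used_reloaded}) produces a genuine $N$-circuit $o''$ with $o''\cap b=\{e_0\}$, which is the contradiction. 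Without isolating this finite-shortcut argument and the explicit appeal to tameness, the claim that $o'$ is a circuit (or even a scrawl) of $N$ is unsupported.
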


\begin{proof}
 First we pick an orientation $\vec{o}$  of $o$.
Then we pick a $\Zbb$-indexed family of distinct edges $e_{i}$
 such that their ordering on $\omega\vec{o}\omega$ is the same as the ordering of their indices and $(e_{i}|i>0)$ and $(e_{i}|i<0)$ both converge to $\omega$. 

Let $s_i$ and $t_i$ be the endvertices of $e_i$ such that $s_i<t_i$ on $\omega\vec{o}\omega$.
We repeatedly apply \autoref{shortcut_can_be_used_reloaded} to get a $\Zbb$-indexed family of vertex-disjoint $t_i$-$s_{i+1}$-paths $P_i$ with $P_i$ disjoint from $t_{i+1}\vec{o}s_{i}$.

Let $D$ be the double ray obtained from sticking the $P_i$ and the $e_i$ together, formally:

\[
 \ldots t_{-1}P_{-1}s_{0}e_{0}t_{0}P_{0}s_{1}e_1t_1P_1 \ldots
\]

By construction, both tails of $D$ belong to $\omega$. So $D$ is a topological cycle.
It remains to show that $D\in\Ccal(N)$.
Suppose not, for a contradiction: Then $D\in \Ical(N)$, so there is a $N$-bond
$b$ with $b\cap D=\{e_0\}$. 

Since $N$ is tame, $o\cap b$ is finite, so there are only finitely many $i\in \Zbb$ 
such that $b$ meets $t_i\vec{o}s_{i+1}$. Let $K$ be the set of such $i$.

By applying \autoref{shortcut_can_be_used_reloaded} finitely often, we get a circuit
$o''$ that meets $b$ precisely in $e_0$. Formally, 
\[
 o''=o\sm \left(\bigcup_{i\in K}(t_i\vec{o}s_{i+1})\right)\cup \left(\bigcup_{i\in K}P_i\right)
\]
So there are a circuit and a cocircuit of $N$ which meet just once, which is the desired contradiction.
\end{proof}

\begin{lem}\label{o_cap_b=0}
Let $b\in \Ccal(N^*)$ and $\omega$ be an end in the closure of $b$. 
Assume there is a double ray $o\in \Ccal(N)$  
both of whose tails converge to $\omega$.

Then there is such an $o$ that does not meet $b$.
 \end{lem}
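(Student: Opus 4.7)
The plan is to adapt the proof of \autoref{make_1_ended}, assembling the desired $o'$ as a double ray
$D = \ldots P_{-1} e_0 P_0 e_1 P_1 \ldots$
from a $\Zbb$-indexed family of edges $(e_i)_{i\in\Zbb}$ of $o$ and shortcut paths $P_i$, but with the additional requirement that every $e_i$ and every $P_i$ is disjoint from $b$. After orienting $\vec o$, I would use tameness of $N$ (so $o\cap b$ is finite) to choose $(e_i)_{i\in\Zbb}$ in $o\setminus b$ whose ordering on $\omega \vec o \omega$ matches the ordering of indices and which converges to $\omega$ in both directions. Writing the endvertices of $e_i$ as $s_i < t_i$ along $\omega \vec o \omega$, I would then build inductively, via \autoref{shortcut_can_be_used_reloaded}, a pairwise vertex-disjoint family of $t_i$-$s_{i+1}$-paths $P_i$ that avoid both $b$ and $t_{i+1}\vec o s_i$. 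The result $D$ is then a topological circle with both tails converging to $\omega$ and satisfies $D \cap b = \emptyset$ by construction.

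To confirm that $D \in \Ccal(N)$ I would follow the same contradiction argument as in \autoref{make_1_ended}. If $D$ were not a circuit it would be independent in $N$; extending to an $N$-base and taking the fundamental cocircuit of $e_0$ would yield an $N$-cocircuit $b'$ with $b' \cap D = \{e_0\}$. By tameness, $b' \cap o$ is finite, so finitely many applications of \autoref{shortcut_can_be_used_reloaded} to $o$ (exactly as in the proof of \autoref{make_1_ended}) produce an $N$-circuit meeting $b'$ only in $e_0$, contradicting the fact that a circuit and cocircuit of a matroid never meet in exactly one element.

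The main obstacle will be securing the shortcut paths $P_i$ inside $G \setminus b$. Since $e_i \notin b$, both endvertices of $e_i$ lie on the same side of the bond $b$; the delicate point is that consecutive $e_i, e_{i+1}$ must also lie on the same side of $b$ for a $b$-avoiding path between them to exist. For $|i|$ sufficiently large, $e_i$ lies on a tail of $o$ past all of $o \cap b$, and that tail lies on a definite side of $b$, so the question reduces to whether both tails of $o$ eventually lie on the same side -- equivalently, whether $|o \cap b|$ is even. I would establish this parity from the fact that a topological circle in $|G|$ crosses a bond of $G$ an even number of times when the crossings are finite; alternatively, one can run the construction iteratively by shortcutting the small arc between two consecutive $b$-edges through a path in the common side of $b$ that they bound, reducing $|o \cap b|$ by $2$ at each step. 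Terminating at $|o \cap b| = 1$ would immediately contradict \autoref{never_meet_just_once}, so the procedure must in fact bottom out at $|o \cap b| = 0$, which delivers the required $o'$.
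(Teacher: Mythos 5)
Your overall strategy --- inducting on the finite quantity $|o\cap b|$ --- matches the paper's, but the reduction step is carried out by a very different mechanism, and the mechanism you propose has a real gap. The paper does not construct a $b$-avoiding path at all. Instead, since $b$ is a bond of $G$ and hence a cocircuit of $M_{FC}(G)$, the dual of \autoref{o_cap_b} applied in $M_{FC}(G)$ gives a \emph{finite} circuit $o'$ with $o'\cap b=\{e,f\}$; being a finite cycle, $o'$ is automatically an $N$-circuit. Circuit elimination on $o$ and $o'$, eliminating $e$ and keeping an edge $z$ from a tail of $o$ beyond $o'$, then yields a new double ray $o''\subseteq o\cup o'-e$ converging to $\omega$ with $o''\cap b\subseteq(o\cap b)-e$. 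Note that $o'$ still meets $b$ in $f$ and is not required to avoid $b$, so the reduction is only by one per step; the case $|o\cap b|=1$ is then ruled out by the matroid circuit/cocircuit axiom, and no parity claim is ever needed.

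Your route has two concrete difficulties. First, the parity fact you invoke is false for infinite bonds: in the infinite ladder with rungs $l_0,l_1,\dots$, the topological circle formed from the two rays together with $l_0$ meets the bond of all rungs in exactly one edge, precisely because that circle reaches the end lying in the closure of the bond. (Of course $|o\cap b|=1$ cannot occur for an $N$-circuit/$N$-cocircuit pair, but that on its own does not yield general parity; it would need a separate argument.) Second, even your iterative variant requires a shortcut path $P$ that \emph{simultaneously} avoids $b$ and avoids interior points of the arc you are keeping, so that \autoref{shortcut_can_be_used_reloaded} applies. The only tool provided, \autoref{shortcut_exists}, produces paths avoiding a prescribed finite vertex set disjoint from $o$, not a (possibly infinite) edge set such as $b$; and since the kept arc itself runs through both sides of $b$, a $b$-avoiding path between the outer endvertices of $e$ and $f$ need not be disjoint from it. Closing these gaps would amount to reproving the reduction step from scratch; the paper's move from explicit path surgery to matroid circuit elimination with a finite auxiliary circuit is exactly what sidesteps both problems.
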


\begin{proof}
We prove this by induction on $|o\cap b|$; the base case $|o\cap b|=0$ is clear. 
The case $|o\cap b|= 1$ is impossible.
So suppose for the induction step that $|o\cap b|\geq 2$. Thus we may pick $e,f\in o\cap b$.
Since $b\in\Ccal(M_{FC}(G))$, there is a finite circuit $o'$ meeting $b$ in precisely $e$ and $f$. Note that $o'$ is an $N$-circuit. Now pick $z$ in the infinite component of $o\sm o'$ containing $\omega$.

Applying circuit elimination to $o$ and $o'$ eliminating $e$ 
and keeping $z$ yields an $N$-circuit $o''\se o\cup o'-e$ through $z$.
By the choice of $z$, the subgraph with edge set $o\cup o'-e-z$ has two components one of which is a ray $R$ from one endvertex of $z$ converging to $\omega$. Since each vertex is incident with $0$ or $2$ edges of $o''$ and $z\in o''$, the ray $R$ must be included in $o''$.
Hence $o''$ must be infinite, it also has only the end $\omega$ in its closure since 
$o''\se o\cup o'-e$ has no other end in its closure. 
Now $o''\cap b\se (o\cap b)-e$. This completes the induction step.
\end{proof}

\begin{lem}\label{no_end_in_closure}
Let $b\in \Ccal(N^*)$ and $\omega$ be an end in the closure of $b$. 

Then there is no double ray $o\in \Ccal(N)$ both of whose tails converge to $\omega$ with $o\cap b=\emptyset$.
\end{lem}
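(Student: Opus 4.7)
Suppose for contradiction that such a double ray $o \in \Ccal(N)$ exists. Since $o$ is connected and $o \cap b = \emptyset$, it lies entirely in one side $A$ of the bond $b$; let $B$ be the other side. Since $\omega \in \overline{b}$, the side $B$ contains a ray $R$ converging to $\omega$, and $b$ has infinitely many edges accumulating at $\omega$. The plan is to construct an $N$-circuit $\tilde o$ with $|\tilde o \cap b| = 1$; this immediately contradicts $b$ being an $N$-cocircuit.

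Concretely, fix an edge $e = xy \in b$ with $x \in A$, $y \in B$ chosen inside a small neighbourhood of $\omega$. Pick a late vertex $v_k$ on the positive tail of $o$ and a late vertex $y_m$ on $R$; using \autoref{shortcut_exists} applied inside $G[A]$ and $G[B]$, choose a finite path $P^A$ in $G[A]$ from $v_k$ to $x$ internally disjoint from $o$, and a finite path $P^B$ in $G[B]$ from $y_m$ to $y$ internally disjoint from $R$. Set
\[
\tilde o := (\text{positive tail of } o \text{ from } v_k) \cup P^A \cup \{e\} \cup P^B \cup (\text{tail of } R \text{ from } y_m).
\]
The two tails of $\tilde o$ both converge to $\omega$, so $\tilde o$ is a topological circle in $|G|$ passing through $\omega$ exactly once, and by construction $\tilde o \cap b = \{e\}$.

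The heart of the proof is to establish $\tilde o \in \Ccal(N)$, which we do by mirroring the contradiction step in the proof of \autoref{make_1_ended}. The underlying subgraph of $\tilde o$ contains $\tilde o$ itself as its only topological cycle (the rest is tree-like), so if $\tilde o \notin \Ccal(N)$ then $\tilde o$ must be $N$-independent. Extend $\tilde o$ to an $N$-basis $S$ and pick an edge $e_0$ on the positive tail of $o$ (hence $e_0 \in \tilde o \subseteq S$); the fundamental cocircuit $b^*$ of $e_0$ with respect to $S$ then satisfies $b^* \cap \tilde o = \{e_0\}$, so in particular $b^*$ meets the positive tail of $o$ exactly in $e_0$. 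By tameness of $N$ the intersection $o \cap b^*$ is finite, so only finitely many arcs of $o$ contain a $b^*$-edge other than $e_0$. As in \autoref{make_1_ended}, one then uses \autoref{shortcut_exists} to produce shortcut paths in $G$ that avoid $b^*$, and applies \autoref{shortcut_can_be_used_reloaded} finitely many times to replace the offending arcs of $o$, obtaining an $N$-circuit $o''$ with $o'' \cap b^* = \{e_0\}$; this contradicts $b^*$ being an $N$-cocircuit.

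The main technical difficulty is in the inner contradiction argument, namely choosing the shortcut paths for $o$ so that they genuinely avoid $b^*$. This requires grouping the finitely many $b^*$-edges of $o \cap b^*$ along $o$ in such a way that each shortcut has its endpoints on the same side of $b^*$ and hence can be drawn in that single side; the finiteness of $o \cap b^*$ and the connectivity of each side of the bond $b^*$ make this feasible, and \autoref{shortcut_exists} delivers the actual paths while avoiding the finitely many forbidden vertices that arise.
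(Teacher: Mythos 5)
Your plan is genuinely different from the paper's, and the gap lies precisely where you flag ``the main technical difficulty'': establishing $\tilde o \in \Ccal(N)$. After assuming $\tilde o$ is independent and taking a fundamental cocircuit $b^*$ with $b^* \cap \tilde o = \{e_0\}$, you claim that shortcut paths for $o$ avoiding $b^*$ can be produced via \autoref{shortcut_exists}. But that lemma only lets one avoid a prescribed \emph{finite set of vertices}; it gives no control over which side of an auxiliary bond $b^*$ the path lies on. The analogy with the contradiction step in \autoref{make_1_ended} also breaks down: there the circuit $D$ is obtained from $o$ by swapping specific arcs $t_i\vec{o}s_{i+1}$ for specific paths $P_i$ that are themselves part of $D$ (hence automatically meet the bond only in $e_0$), and the argument simply undoes finitely many of these swaps. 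Here $\tilde o$ shares with $o$ only a single tail, while the remaining pieces of $\tilde o$ (the edge $e$, $P^B$, and the tail of $R$) lie in $B$ or cross $b$, so none of them can serve as a shortcut for an arc of $o \subseteq A$; there are no swaps to undo. Worse, if $\tilde o$ is independent with $\tilde o \cap b^* = \{e_0\}$, then $\omega$ must lie in the closure of $b^*$ (the two pieces of $\tilde o - e_0$ are rays to $\omega$ lying on opposite sides of $b^*$), and then the two tails of $o$ beyond the finite set $o \cap b^*$ may lie on opposite sides of $b^*$; in that case no finite re-routing of $o$ inside one side of $b^*$ can shrink $o \cap b^*$ to $\{e_0\}$. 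Your inner step effectively asks to re-derive the lemma itself (for $b^*$ in place of $b$) without any new leverage.

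The paper avoids all of this by aiming for the \emph{opposite} kind of contradiction: rather than producing a circuit that meets $b$ exactly once, it produces a circuit that meets $b$ infinitely often. Since both $o$ (in side $C_1$) and the ray $R$ (in side $C_2$) converge to $\omega$, there are infinitely many vertex-disjoint $o$--$R$ paths $P_i$, each of which must cross $b$. After a Ramsey-type cleanup, the paths $P_{2i-1}, P_{2i}$ together with segments of $R$ and of $o$ yield finite circuits $C_{x_i}$, and applying infinite circuit elimination, i.e.\ axiom (C3) with an infinite set $X = \{x_i\}$, to $o$ and the $C_{x_i}$ forces the resulting circuit to be a double ray $D$ containing infinitely many of the $P_i$. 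Hence $D\cap b$ is infinite, contradicting tameness of $N$. That route never needs to control which side of any bond a shortcut lies on, which is exactly what your argument cannot do.
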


\begin{proof}
Suppose for a contradiction that there is such an $N$-circuit $o$. 
Let $C_1$ and $C_2$ be the two sides of $b$. Since $o\cap b=\emptyset$ and 
since the double ray $o$ is connected as a subgraph, it lies entirely on one side, say $C_1$.
Since $G$ is locally finite, $C_2$ includes a ray $R$ converging to $\omega$.

 \begin{figure}
\begin{center}
 \includegraphics[width=4cm]{./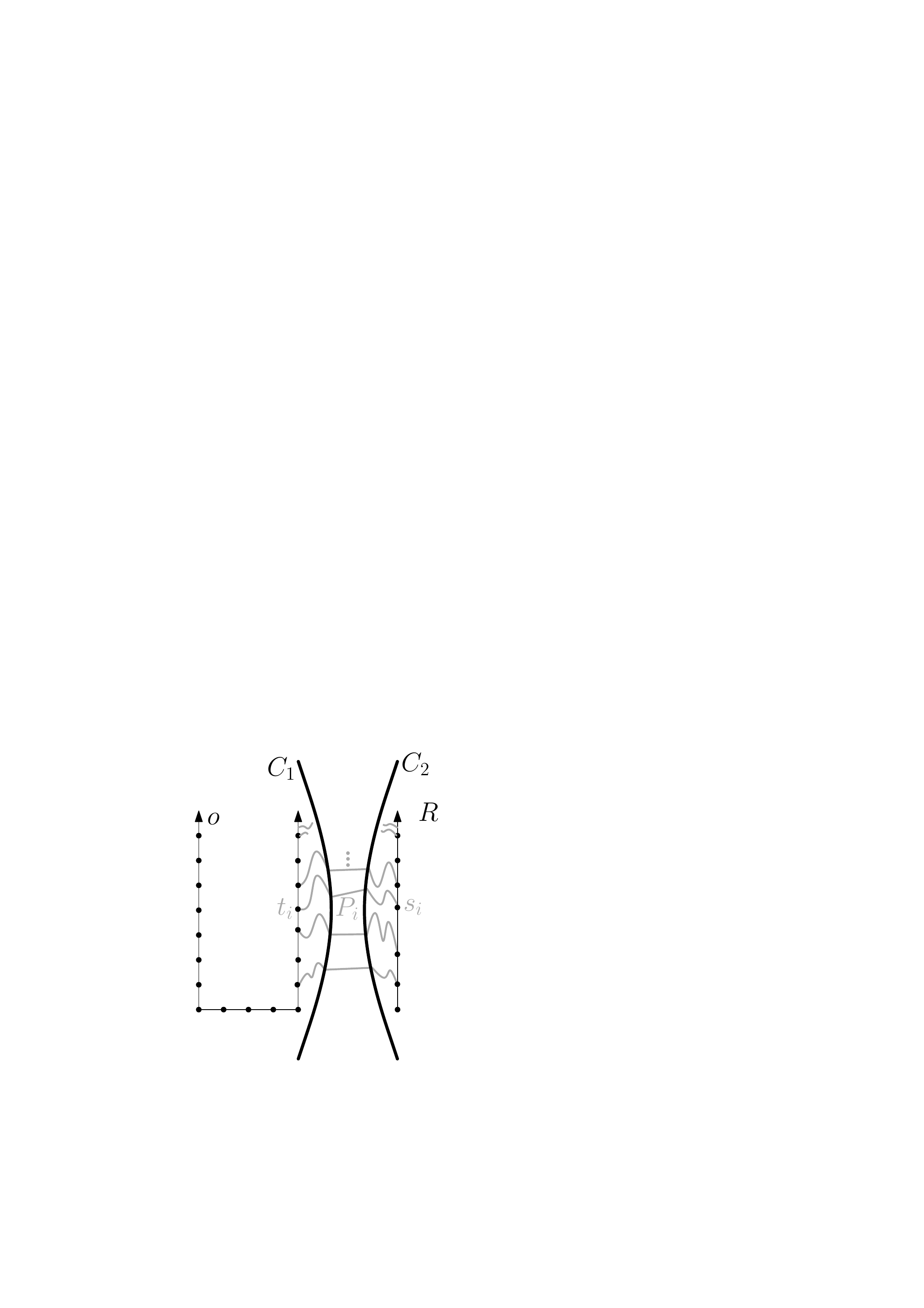}
\end{center} 
\caption{The $o$-$R$-paths $P_i$.}	\label{fig:infinite_eli}
\end{figure}

Now we construct $o$-$R$-paths $P_i$ as in \autoref{fig:infinite_eli}.
Since $o$ and $R$ both have $\omega$ in their closure, there are infinitely many vertex disjoint $R$-$o$-paths $(P_i|i\in \Nbb)$. We enumerate the $P_i$ such that in the linear order on $R$, the starting vertex of $P_i$ is less than the starting vertex of $P_j$ if and only if $i>j$. By Ramsey's theorem there is a tail $R_o$ of $o$ and $N\se \Nbb$ such that all $P_i$
with $i\in \Nbb$ have their endvertex on $R_o$, and in the linear order on $R_o$, the endvertex of $P_i$ is less than the endvertex of $P_j$ if and only if $i>j$.
By relabeling the indices of the $P_i$ if necessary, we may assume that $N=\Nbb$.
Let $s_i$ be the starting vertex of $P_i$, and $t_i$ be its endvertex.

Now we prepare to apply the infinite circuit elimination axiom. We pick some edge $x_i$ between $t_{2i-1}$ and $t_{2i}$ on $R_o$, and pick some $z\in o\sm R_o$.
Then $C_{x_i}=t_{2i-1}R_ot_{2i}P_{2i}s_iRs_{2i-1}P_{2i-1}t_{2i-1}$ is a finite circuit.
So $C_{x_i}$ is an $N$-circuit. We apply circuit elimination 
to $o$ and the $C_{x_i}$ eliminating the $x_i$ and keeping $z$.
Thus there is an $N$-circuit $o'$ through $z$ that is included in:
\[
 \left(o\cup \left(\bigcup_{i\in \Nbb} t_{2i}P_{2i}s_iRs_{2i-1}P_{2i-1}t_{2i-1}\right)\right)\sm \{x_i|i\in\Nbb\}
\]

Since each vertex has degree $0$ or $2$ on $o'$, no edge from any of the finite paths 
$X_i:=t_{2i-1}R_ot_{2i}$ is in $o'$. 
Hence $o'$ is included in:

\[
D:=(o\sm R_o)\cup \left(\bigcup_{i\in \Nbb} C_{x_i}\sm X_i\right)
\]

But $D$ is a double ray. So $D=o'$ and is an $N$-circuit. But $D\cap b$ is infinite. This contradicts the tameness of $N$.
\end{proof}

\begin{proof}[Proof of \autoref{cycle_matroids}.]
 First we show that there cannot be an $N$-circuit $o$, and an $N$-cocircuit $b$ that have a common end in their closure. Suppose for a contradiction there are such $o$ and $b$. By \autoref{make_1_ended}, we get that there is such an $o$ with only the end $\omega$ in its closure. By \autoref{o_cap_b=0}, we get there is such an $o$ that additionally does not meet $b$. By \autoref{no_end_in_closure}, we then get the desired contradiction.
So no end $\omega$ is ever in the closure of both a $N$-circuit and a $N$-cocircuit.

This motivates the following definition. Let $\Psi$ be the set of ends that are in the closure of some $N$-circuit. Then every $N$-circuit is a $\Psi$-circuit and every $N$-cocircuit is a $\Psi\ct$-cocircuit. Since $N$ is a matroid, and the intersection of any $\Psi$-circuit with any $\Psi\ct$-cocircuit is never of size $1$ by \autoref{never_meet_just_once}, we are in a position to apply \autoref{cir_c_scrawl}. Hence $N=M(G,\Psi)$.
\end{proof}

\section{Outlook}\label{outlook}

\autoref{cycle_matroids} and \autoref{nice_tree} are closely related. Consider first \autoref{nice_tree}, applied to a tree $\Tcal = (T, M)$ of finite matroids. Then the matroid $M_{\emptyset}(\Tcal)$ is finitary and the matroid $M_{\Omega(T)}(\Tcal)$ is cofinitary. The niceness condition tells us even more: $M_{\emptyset}(\Tcal)$ is the finitarisation of $M_{\Omega(T)}(\Tcal)$, that is, its circuits are precisely the finite circuits of $M_{\Omega(T)}(\Tcal)$. Dually, $M_{\Omega(T)}(\Tcal)$ is the cofinitarisation of $M_{\emptyset}(\Tcal)$, that is, its cocircuits are precisely the finite cocircuits of $M_{\emptyset}(\Tcal)$. \autoref{nice_tree} determines the lattice of tame matroids $N$ lying between these two in the sense that all their circuits are circuits of $M_{\Omega(T)}(\Tcal)$ and all their cocircuits are cocircuits of $M_{\emptyset}(\Tcal)$.

The situation in \autoref{cycle_matroids} is similar. Since $G$ is a locally finite graph, $M_{FC}(G)$ is the finitarisation of $M_C(G)$ and $M_C(G)$ is the cofinitarisation of $M_{FC}(G)$. Once more, we have characterised the lattice of tame matroids lying between these two. The similarity runs even deeper. In~\cite{BC:psi_matroids} we show that $\Psi$-matroids for a locally finite graph $G$ are naturally thought of as being constructed from trees of finite matroids associated to $G$ (though these trees of matroids need not have overlap 1).

This leads us to consider the following general context. We denote the finitarisation of a matroid $M$ by $M\fin$ and the cofinitarisation of $M$ by $M\cofin$. We say that matroids $M_f$ and $M_c$ are {\em twinned} if $M_f = M_c\fin$ and $M_c = M_f\cofin$. Then we say that $N$ {\em lies between} $M_f$ and $M_c$ if all of its circuits are $M_c$-circuits and all of its cocircuits are $M_f$-cocircuits. 

\begin{oque}
 What is the structure of the lattice of tame matroids between $M_f$ and $M_c$?
\end{oque}

Examples of twinned pairs $(M_f, M_c)$ of matroids abound.

\begin{prop}
Let $M$ be any finitary matroid. Then $((M\cofin)\fin)\cofin = M\cofin$, so that $(M\cofin)\fin$ and $M\cofin$ are twinned.
\end{prop}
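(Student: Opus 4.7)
The two twinning conditions $M_f = M_c\fin$ and $M_c = M_f\cofin$ reduce, for $M_c = M\cofin$ and $M_f = (M\cofin)\fin$, to the single nontrivial identity $((M\cofin)\fin)\cofin = M\cofin$, the other being tautological. Both sides of this identity are cofinitary, so they are equal once we verify they have the same cocircuits. By the definition of cofinitarisation, these cocircuits are the finite $M$-cocircuits on the right and the finite $(M\cofin)\fin$-cocircuits on the left, so the plan is to show that the latter coincide with the former.

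The key preparatory observation is that every $M$-circuit $o$ is an $(M\cofin)\fin$-scrawl, and this is where the finitarity of $M$ is essential. Indeed $o$ is finite, and since $o$ never meets any $M$-cocircuit just once it never meets any $M\cofin$-cocircuit (which is merely a finite $M$-cocircuit) just once either; so \autoref{is_scrawl} applied to $M\cofin$ expresses $o$ as a union of $M\cofin$-circuits $o_i \subseteq o$. Each $o_i$ is then finite and hence an $(M\cofin)\fin$-circuit, so $o$ is indeed an $(M\cofin)\fin$-scrawl.

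For the inclusion ``every finite $(M\cofin)\fin$-cocircuit is an $M$-cocircuit'', I would take a finite $(M\cofin)\fin$-cocircuit $b$. Being an $(M\cofin)\fin$-coscrawl, $b$ never meets any $(M\cofin)\fin$-scrawl just once; by the previous paragraph it therefore never meets any $M$-circuit just once, and \autoref{is_scrawl} (applied to $M$) identifies $b$ as an $M$-coscrawl. Writing $b = \bigcup_{e \in b} b_e$ with each $b_e$ an $M$-cocircuit through $e$ contained in $b$, finiteness of $b$ makes each $b_e$ a finite $M$-cocircuit and hence an $M\cofin$-cocircuit; since such a cocircuit never meets an $M\cofin$-circuit, and in particular not an $(M\cofin)\fin$-circuit, just once, a further application of \autoref{is_scrawl} shows $b_e$ is itself an $(M\cofin)\fin$-coscrawl contained in $b$. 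Minimality of $b$ as an $(M\cofin)\fin$-cocircuit then forces $b_e = b$, so $b$ is a finite $M$-cocircuit.

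The reverse inclusion follows by the same scrawl/coscrawl manoeuvre: a finite $M$-cocircuit $b$ is an $M\cofin$-cocircuit and so, by the argument just used for $b_e$, an $(M\cofin)\fin$-coscrawl, hence contains an $(M\cofin)\fin$-cocircuit $b'$. By the inclusion just established $b'$ is a finite $M$-cocircuit contained in $b$, so $b' = b$ by the minimality of $M$-cocircuits. The main obstacle is the preparatory observation, which is the only place the hypothesis that $M$ is finitary is used; everything thereafter is a direct application of \autoref{is_scrawl} and its dual.
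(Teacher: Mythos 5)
Your proof is correct and follows essentially the same route as the paper's: both rely on \autoref{is_scrawl} and its dual, use the key observation that every (finite, by finitarity) $M$-circuit is an $(M\cofin)\fin$-scrawl, and transfer between a matroid and its (co)finitarisation via finiteness of the relevant (co)circuits. The only difference is that you make the final minimality step (mutual coscrawl-hood implies equality of cocircuit sets) explicit, whereas the paper leaves it implicit.
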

\begin{proof}
No circuit of $(M\cofin)\fin$ ever meets a cocircuit of $M\cofin$ in just one element, so every cocircuit $b$ of $M\cofin$ is a coscrawl of $(M\cofin)\fin$ by the dual of \autoref{is_scrawl}. The cocircuits in the union must all be cocircuits of $((M\cofin)\fin)\cofin$ since $b$ is finite. Similarly, every circuit of $M$ is a scrawl of $(M\cofin)\fin$, so that no cocircuit of $((M\cofin)\fin)\cofin$ ever meets a circuit of $M$ in just one element, and so every cocircuit $b$ of $((M\cofin)\fin)\cofin$ is a coscrawl of $M\cofin$ by the dual of \autoref{is_scrawl} applied to $M$ and finiteness of $b$. Thus $((M\cofin)\fin)\cofin$ and $M\cofin$ have the same cocircuits.
\end{proof}

Thus for example if we begin with the finite cycle matroid of a graph $G$ then its cofinitarisation is twinned with the finitarisation of its cofinitarisation. In fact these are just the topological cycle matroid and finite cycle matroid of the finitely separable quotient of $G$, in which we identify any two vertices that cannot be separated by removing finitely many edges of $G$.

Another example arises when we glue together finite matroids along a tree $\Tcal$ of arbitrary overlap. In such cases we need the matroids to be representable over a common field in order to define the gluing: see~\cite{BC:psi_matroids} for details. Once more, if $\Psi$ is a Borel set of ends then we get a matroid $M_{\Psi}(\Tcal)$ in which we only allow circuits to use ends from $\Psi$. Once more, $M_{\emptyset}(\Tcal)$ is finitary and $M_{\Omega}(\Tcal)$ is cofinitary. If the tree is nice then $M_{\emptyset}(\Tcal)$ and $M_{\Omega}(\Tcal)$ are twinned. We conjecture that in such cases all tame matroids between $M_{\emptyset}(\Tcal)$ and $M_{\Omega}(\Tcal)$ are of the form $M_{\Psi}(\Tcal)$.

Even though there are lots of examples, the lattices of tame matroids between twinned pairs of matroids are poorly understood. For example, if $(M_f, M_c)$ is a twinned pair of matroids it is clear that every finite minor of $M_f$ on a subset $F$ of their common ground set also arises as a minor on the same subset $F$ of any matroid lying between them. It isn't clear whether the converse holds, even if $N$ has to be tame. Even the following question remains open:

\begin{oque}
We say a tame matroid is {\em binary} if it does not have $U_{2,4}$ as a minor~\cite{BC:rep_matroids}.
Let $(M_f, M_c)$ be a twinned pair of binary matroids. Must every tame matroid lying between them be binary?
\end{oque}

\bibliographystyle{plain}
\bibliography{literatur}

\end{document}